\newtheorem{Zae}{Zae}[section] 
\newtheorem{definition}[Zae]{Definition} 
\newtheorem{lemma}[Zae]{Lemma} 
\newtheorem{prop}[Zae]{Proposition}
\newtheorem{theorem}[Zae]{Theorem} 
\newtheorem{coro}[Zae]{Corollary} 
\newtheorem{example}[Zae]{Example}
\newtheorem{remark}[Zae]{Remark}
\newtheorem{conjecture}[Zae]{Conjecture}
\newcommand{\qed}{\raisebox{-.8ex}{$\Box$}}
\newenvironment{proof}
{\noindent{\bf Proof.}}
{\hfill \qed\\}
\begin{document}
\title{On the axioms defining a quadratic Jordan division algebras}      
\author{Matthias Gr\"uninger\footnote{supported by the ERC (grant \# 278 469)} \\
Universit\'e catholique de Louvain
 \\      
Institute pour la recherche en math\'ematique et physique 
\\ Chemin du Cyclotron 2, bte. L7.01.02 \\
1348 Louvain-la-Neuve, Belgique  \\
E-Mail:matthias.grueninger@uclouvain.be}    
\maketitle      
\begin{abstract} Quadratic Jordan algebras are defined by identities that have to hold strictly, i.e that
continue to hold in every scalar extension. In this paper we show that strictness is not required for quadratic 
Jordan division algebras.
{\small {\it Keywords}: Quadratic Jordan algebras, Moufang sets}
\end{abstract}      

\section{Basics}
We begin with the classical definition of Jordan algebras.
\begin{definition}
Let $k$ be a field with $char k \ne 2$. 
\begin{enumerate}
\item
A commutative, unital $k$-algebra $J$ is called a (linear) Jordan algebra
if 
$${\rm (J)}\hspace{2cm} a^2 \cdot (ba) =(a^2 \cdot b) \cdot a$$
holds for all $a,b \in J$.
\item A non-zero element $a$ of Jordan algebra $J$ is called invertible if there is an element $a^{-1} \in J$ 
with $a \cdot a^{-1} =1$ and $a^2 \cdot a^{-1} =a$.
\item A Jordan algebra is called a Jordan division algebra if every non-zero element is invertible.
\end{enumerate}  
\end{definition}
The standard example for a Jordan algebra arises in the following way: Let $A$ be an associative $k$-algebra.
Define a new multiplication $\circ$ on $A$ by
$$a \circ b = \frac{1}{2} (ab +ba).$$
Then $A^+ =(J,\circ)$ is a Jordan algebra.\bigskip \\
As one can already see in this example, the constraint that $char k \ne 2$ is necessary.
One can see that a commutative algebra over a field of characteristic $2$ which satisfies (J) is associative.
But since Jordan algebras have been a useful tool to describe some algebraic groups that are also defined over
fields of even characteristic, one has to alter the definition of a Jordan algebra to include the case
characteristic $2$. In 1966 Kevin McCrimmon came up with new definition for Jordan algebras which works
for a field of arbitrary characteristic (see \cite{McCr}). For convenience, we first introduce some
more definitions.
\begin{definition}
Let $k$ be a field of arbitrary characteristic. 
\begin{enumerate}
\item If $V, W$ are two vector-spaces or $k$, then a map $Q: V\to W$ is called quadratic if
$Q(tv)=t^2 Q(v)$ for all $t \in k, v\in V$ and if there is a $k$-bilinear map $f: V\times V \to W$ 
with $Q(v+w)=Q(v) +Q(w) +f(v,w)$ for all $v,w \in V$. 
\item A quadratic algebra over $k$ is a pair $(J,Q)$ where $J$ is 
a $k$-vectorspace and $Q: J\to End_k(J): a \mapsto Q_a$ is quadratic.
\item 
For a quadratic algebra $(J,Q)$ and $a,b\in J$ one defines the maps $
Q_{a,b}, V_{a,b}: J \to J$ by $cQ_{a,b} =cQ_{a+b} -cQ_a -cQ_b$ and $cV_{a,b} =
bQ_{a,c}$ for $c\in J$. Of course, one has $cV_{a,b} =aV_{c,b}$ for all $a,b,c \in J$. The map
$J\times J \to End_k(J): (a,b) \mapsto V_{a,b}$ is $k$-bilinear.
\item An element $a \in J$ is called invertible if $Q_a$ is invertible. The element $a^{-1}:= aQ_a^{-1}$ 
is called the inverse of $a$. We denote the se t of invertible elements of $J$ by $J^*$. 
\item 
If $K$ is an extension field of $k$, then one defines the quadratic algebra $J_K:=(K \otimes_k J, \hat{Q})$ by
$\hat{Q}_{\sum_{i=1}^n t_i \otimes a_i} =\sum_{i=1}^n t_i^2 Q_{a_i} +\sum_{i <j} t_i t_j Q_{a_i,a_j}$. 
We say that an identity in $J$ holds \textbf{strictly} if it holds in $J_K$ for all extensions $K/k$.
\end{enumerate}
\end{definition}
\begin{definition}
Let $(J,Q)$ be a quadratic algebra over $k$ and $1 \in J^{\#}:= J\setminus \{0\}$. Then $(J,Q,1)$ is called a 
weak quadratic Jordan algebra if the following holds for all $a,b\in J$.
\begin{itemize}
\item[(QJ1)] $Q_1 =id_J$.
\item[(QJ2)] $Q_a V_{a,b} =V_{b,a} Q_a$.
\item[(QJ3)] $Q_{bQ_a} =Q_a Q_b Q_a$. 
\end{itemize}
A weak quadratic Jordan algebra is called a quadratic Jordan algebra if (QJ1)-(QJ3) hold strictly, i.e. if
$J_K$ is a weak quadratic Jordan algebra for all extension fields $K/k$.
\end{definition}
\begin{remark}\label{remark1}
\begin{enumerate}
\item If $J$ is a weak quadratic Jordan algebra and $a \in J$ is invertible, then we have
$Q_{a^{-1}} =Q_a^{-1}$. Indeed, we have $a =a^{-1} Q_a$ and thus with (QJ3)
$$Q_a =Q_{a^{-1}Q_a} = Q_a Q_{a^{-1}} Q_a,$$
hence $Q_a^{-1} =Q_{a^{-1}}$. Note that if $a,b \in J$ are invertible, then $aQ_b$ and $a^{-1}$ are also
invertible. 
\item (QJ1)-(QJ3) hold strictly iff their linearized versions holds. 
It is clear that (QJ1) always holds strictly if it holds, and
(QJ2) and (QJ3) hold strictly iff additionally
\begin{itemize}
\item[(QJ2*)]
 $Q_{a_1}V_{a_2,b} +Q_{a_1,a_2}V_{a_1,b}=V_{b,a_1}Q_{a_1,a_2}+V_{b,a_2}Q_{a_1}$. 
\item[(QJ3*)]
$Q_{bQ_{a_1},bQ_{a_1,a_2}}=Q_{a_1,a_2}Q_{b}Q_{a_1}+Q_{a_1}Q_{b}Q_{a_1,a_2}$.
\item[(QJ3**)] $Q_{bQ_{a_1,a_2}}+Q_{b Q_{a_2},bQ_{a_1}}=Q_{a_1}Q_{b}Q_{a_2}+Q_{a_2}Q_{b}Q_{a_1} 
+Q_{a_1,a_2}Q_{b}Q_{a_1,a_2}$.
\end{itemize}
hold for all $a_1,a_2,b \in J$. One sees easily that (QJ3) and (QJ3*) imply (QJ3**). If $|k| \geq 3$, then 
then (QJ2*) follows from (QJ2), and if $|k| \geq 4$, then (QJ3*) follows from (QJ3).   
 Thus if $|k| \geq 4$, then every weak quadratic
Jordan algebra is a quadratic Jordan algebra.
\item The author doesn't know an example of a weak quadratic Jordan algebra which is not a quadratic 
Jordan algebra.
\item Let $J$ be a linear Jordan algebra over $k$. For $a \in J$ define $Q_a: J \to J$ by
$bQ_a = - a^2 \cdot b +2a \cdot (a\cdot b)$. Then $(J,Q,1)$ is a quadratic Jordan algebra. If
$char k \ne 2$ and $J$ is a quadratic Jordan algebra, then we define a multiplication $\cdot $ on $J$
by $a \cdot b = \frac{1}{2} 1 Q_{a,b}$. Then one can show that $J$ is a linear Jordan algebra. Therefore 
for $char k \ne 2$ these two concepts coincide
(see for example \cite{McCr}). Moreover, an element is invertible in the linear Jordan 
algebra
iff it is invertible in the quadratic Jordan algebra.
\end{enumerate}
\end{remark}
\begin{example} 
Let $R$ be a unital, associative algebra over $k$. For $a \in R$ define $Q_a: R \to R: b\mapsto bab$. Then
$R^+:=(R,Q,1)$ is a quadratic Jordan algebra. 
\end{example}
We now introduce the important concept of an isotope.
\begin{definition}
Let $J=(J,Q)$ be a quadratic algebra and $a \in J^*$. We define the $a$-isotope $J^a =(J,Q^a)$ of $J$ by
$xQ^a_y = xQ_a^{-1} Q_y$ for all $x,y \in J$. 
\end{definition}
\begin{lemma}
Let $J$ be a quadratic algebra. For $a,b,c \in J$ we have
\begin{enumerate}
\item $Q^a_a = id_J$.
\item $Q^a_{b,c} = Q_a^{-1} Q_{b,c}$.
\item $V^a_{b,c} = V_{b,cQ_a^{-1}}$.
\end{enumerate}
\end{lemma}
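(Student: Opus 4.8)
The plan is to verify all three identities by unwinding the definition $xQ^a_y = xQ_a^{-1}Q_y$ directly; none of them requires a scalar extension or any of the Jordan axioms, only the bilinearity of the linearization and the fact that $Q_a^{-1}$ is a single fixed $k$-linear endomorphism of $J$. I expect all three to be essentially immediate, with the only genuine care needed being the bookkeeping of the postfix (right-action) convention.

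For (a) I would simply set $y = a$, giving $xQ^a_a = xQ_a^{-1}Q_a = x$ for every $x$, hence $Q^a_a = id_J$. For (b) I would expand $Q^a_{b,c}$ through its defining linearization and then factor: for all $x \in J$,
\[
xQ^a_{b,c} = xQ^a_{b+c} - xQ^a_b - xQ^a_c = xQ_a^{-1}Q_{b+c} - xQ_a^{-1}Q_b - xQ_a^{-1}Q_c = xQ_a^{-1}Q_{b,c},
\]
the last step using $Q_{b,c} = Q_{b+c} - Q_b - Q_c$. This yields $Q^a_{b,c} = Q_a^{-1}Q_{b,c}$.

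The only part with any content is (c), and here I would feed (b) into the defining relation for $V$. Since $xV_{p,q} = qQ_{p,x}$, the isotope version reads $xV^a_{b,c} = cQ^a_{b,x}$; substituting (b) gives $xV^a_{b,c} = c\,Q_a^{-1}Q_{b,x} = (cQ_a^{-1})Q_{b,x}$. Reading this back through the definition of $V$, now with $cQ_a^{-1}$ occupying the second slot, produces $(cQ_a^{-1})Q_{b,x} = xV_{b,cQ_a^{-1}}$, and since this holds for all $x$ we conclude $V^a_{b,c} = V_{b,cQ_a^{-1}}$.

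The main obstacle, such as it is, lies entirely in (c): one must track that the insertion of $Q_a^{-1}$ lands in the second argument of $V$ rather than acting on the input, which is exactly where the symmetry $cV_{a,b} = bQ_{a,c}$ of the definition is used. Once the right-action conventions are pinned down, the three statements follow with no further computation.
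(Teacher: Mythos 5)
Your proof is correct and follows essentially the same route as the paper: parts (a) and (b) are immediate from the definition $xQ^a_y = xQ_a^{-1}Q_y$, and part (c) is exactly the paper's chain $xV^a_{b,c} = cQ^a_{b,x} = cQ_a^{-1}Q_{b,x} = xV_{b,cQ_a^{-1}}$. Your extra care in tracking that $Q_a^{-1}$ lands in the second slot of $V$ is precisely the point the paper's one-line computation relies on.
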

\begin{proof}
(a) and (b) are clear. For (c) let $x \in J$. Then we have
$$xV^a_{b,c} =cQ^a_{b,x} = cQ_a^{-1}Q_{b,x} = xV_{b,cQ_a^{-1}}.$$
Thus the claim follows. 
\end{proof}
\begin{prop}
Let $J$ be a (weak) quadratic Jordan algebra and $a \in J^*$. Then $J^a=(J,Q^a,a)$ is also a (weak) quadratic 
Jordan algebra.
\end{prop}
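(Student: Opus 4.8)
The plan is to verify the three axioms (QJ1)--(QJ3) for $(J,Q^a,a)$ one at a time, using the three formulas of the preceding Lemma together with a single auxiliary identity derived in advance. Throughout I write $Q^a_y = Q_a^{-1}Q_y$ and use Remark \ref{remark1}(a), which gives $Q_{a^{-1}}=Q_a^{-1}$ and the invertibility of $a^{-1}$. Axiom (QJ1) is immediate: part (a) of the Lemma says $Q^a_a=\mathrm{id}_J$, and $a$ is the distinguished element of the isotope.

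The auxiliary identity I want is
$$Q_a V_{bQ_a,c} = V_{b,cQ_a} Q_a \qquad (\dagger)$$
for all $b,c\in J$. To obtain it I first linearize (QJ3) in its \emph{inner} variable: replacing $b$ by $b_1+b_2$ in $Q_{bQ_a}=Q_aQ_bQ_a$, expanding via $Q_{x+y}=Q_x+Q_y+Q_{x,y}$, and subtracting the instances of (QJ3) for $b_1$ and $b_2$ leaves
$$Q_{b_1Q_a,b_2Q_a} = Q_a Q_{b_1,b_2} Q_a. \qquad (\star)$$
The point worth stressing is that $(\star)$ costs nothing beyond (QJ3): it is a linearization in the inner slot, using only the defining bilinearization over the base field, so it is already available in a \emph{weak} quadratic Jordan algebra and needs no strictness. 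To deduce $(\dagger)$ I evaluate both sides at an arbitrary $z\in J$: setting $w=zQ_a$ one has $w V_{bQ_a,c}=c\,Q_{bQ_a,w}=c\,Q_{bQ_a,zQ_a}=c\,Q_aQ_{b,z}Q_a$ by $(\star)$, while $z V_{b,cQ_a}Q_a=(cQ_a\,Q_{b,z})Q_a=c\,Q_aQ_{b,z}Q_a$ as well, so $(\dagger)$ follows.

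For (QJ3) in the isotope I compute $Q^a_{bQ^a_c}$ directly. Since $bQ^a_c=bQ_a^{-1}Q_c$, applying (QJ3) of $J$ to the product $(bQ_a^{-1})Q_c$ turns $Q_{bQ_a^{-1}Q_c}$ into $Q_cQ_{bQ_a^{-1}}Q_c$, and applying (QJ3) once more to $bQ_a^{-1}=bQ_{a^{-1}}$ replaces $Q_{bQ_a^{-1}}$ by $Q_a^{-1}Q_bQ_a^{-1}$ (here Remark \ref{remark1}(a) enters). Collecting factors gives $Q^a_{bQ^a_c}=Q_a^{-1}Q_cQ_a^{-1}Q_bQ_a^{-1}Q_c$, which is exactly $Q^a_cQ^a_bQ^a_c$.

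Finally (QJ2): by the Lemma the left side $Q^a_bV^a_{b,c}$ equals $Q_a^{-1}Q_bV_{b,cQ_a^{-1}}$, and (QJ2) of $J$ converts $Q_bV_{b,cQ_a^{-1}}$ into $V_{cQ_a^{-1},b}Q_b$, while the right side is $V_{c,bQ_a^{-1}}Q_a^{-1}Q_b$. Thus the axiom reduces to $Q_a^{-1}V_{cQ_a^{-1},b}=V_{c,bQ_a^{-1}}Q_a^{-1}$, which is precisely $(\dagger)$ written for $a^{-1}$ in place of $a$ (using $Q_{a^{-1}}=Q_a^{-1}$) with the roles of $b$ and $c$ interchanged. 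This settles the weak case. For the strict statement I would note that the isotope construction commutes with scalar extension, $(J^a)_K=(J_K)^{a}$, and that invertibility of $a$ is preserved in every $J_K$; hence if $J$ is a (strict) quadratic Jordan algebra, the weak result applied to each $J_K$ shows every $(J^a)_K$ is weak, i.e. $J^a$ is a quadratic Jordan algebra. I expect the crux to be the derivation of $(\dagger)$, and in particular the observation that one must route the argument through the inner linearization $(\star)$, which holds without strictness, rather than through the outer linearizations (QJ3*)/(QJ3**) that do not.
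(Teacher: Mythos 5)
Your proof is correct and follows essentially the same route as the paper: (QJ1) and (QJ3) are verified identically, and your identity $(\dagger)$, derived from the inner linearization $(\star)\colon Q_{b_1Q_a,b_2Q_a}=Q_aQ_{b_1,b_2}Q_a$, is exactly the ingredient the paper uses implicitly in its elementwise (QJ2) computation at the step $bQ_{cQ_a^{-1},xQ_a^{-1}}=bQ_a^{-1}Q_{c,x}Q_a^{-1}$. Your observation that only the inner (strictness-free) linearization of (QJ3) is needed, never (QJ3*) or (QJ3**), is a correct and worthwhile point that the paper leaves tacit.
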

\begin{proof}
It is clear that (QJ1) holds. Let $b,c,x \in J$. Then we have
$$xQ_b^a V^a_{b,c} = xQ_a^{-1} Q_b V_{b,cQ_a^{-1}} =xQ_a^{-1} V_{cQ_a^{-1},b} Q_b=
bQ_{cQ_a^{-1},xQ_a^{-1}} Q_b=$$
$$bQ_a^{-1} Q_{c,x} Q_a^{-1} Q_b= xV_{c bQ_a^{-1}} Q^a_b = xV^a_{c,b} Q^a_b.$$
This shows (QJ2). Moreover, we have
$$Q^a_{bQ^a_c} = Q_a^{-1} Q_{bQ_a^{-1} Q_c} = Q_a^{-1} Q_c Q_{bQ_a^{-1}} Q_c =Q_a^{-1} Q_c Q_a^{-1} Q_b
Q_a^{-1} Q_c =Q^a_c Q^a_b Q^a_c.$$
This shows (QJ3). Hence if $J$ is a weak quadratic Jordan algebra, so is $J^a$. Moreover, we have
$(K \otimes_k J)^a =K \otimes_k J^a$ for all extension fields $K/k$, thus $J^a$ is a quadratic Jordan algebra
if $J$ is.  
\end{proof}
\begin{definition}
\begin{enumerate}
\item If $(J,Q,1)$ and $(J^{\prime},Q^{\prime},1^{\prime})$ are weak quadratic Jordan algebras over $k$, 
then a Jordan homomorphism between $J$ and $J^{\prime}$ is a homomorphism 
$f: J \to J^{\prime}$ such that $f(1)=1^{\prime}$ and 
$f(aQ_b)=f(a)Q^{\prime}_{f(b)}$ for all $a,b \in J$ holds.
\item If $(J,Q,1)$ and $(J^{\prime},Q^{\prime},1^{\prime})$ are weak quadratic Jordan algebras over $k$, 
then $J$ and $J^{\prime}$ are called isotopic iff $J^{\prime}$ is isomorphic to an isotope of $J$. 
\item Let $J^{\prime}$ be a subspace of a weak quadratic Jordan algebra $J$. Then $J^{\prime}$ is called
a Jordan subalgebra of $J$ if $e \in J$ and if $J^{\prime} Q_a \subseteq J^{\prime}$ for all
$a \in J^{\prime}$ holds.
\item A quadratic Jordan algebra $J$ is called special if there is an associative $K$-algebra $R$ such 
that $J$ is isomorphic to a Jordan subalgebra of $R^+$. 
\end{enumerate}
\end{definition}
In this paper we are mainly interested in (weak) quadratic Jordan division algebras.
\begin{definition}
A (weak) quadratic Jordan algebra is called a (weak) quadratic Jordan division algebra if every 
non-zero element in $J$ is invertible.
\end{definition}
The theory of quadratic Jordan division algebras is connected with the theory of Moufang sets.
\begin{definition}
A Moufang set consists of a set $X$ with $|X| \geq 3$ and a family $(U_x)_{x\in X}$ of subgroups in $Sym X$ 
such that the following holds:
\begin{enumerate}
\item For all $x \in X$ the group $U_x$ fixes $x$ and acts regularly on $X \setminus \{x\}$.
\item For all $x,y \in X$ and all $g\in U_x$ we have $U_y^g =U_{yg}$.
\end{enumerate}
The groups $U_x$ are called the \textbf{root groups} of the Moufang set.
The group $G^{\dagger} :=\langle U_x; x\in X\rangle$ is called the little projective group of the Moufang 
set.
$G^{\dagger}$ is a $2$-transitive subgroup of $Sym X$. The Moufang set is called \textbf{proper} if $G^{\dagger}$ 
is not sharply $2$-transitive and \textbf{improper} else.
\end{definition}
\begin{example}
\begin{enumerate}
\item Let $X$ be a set with at least $3$ elements, $G \leq Sym X$ be a sharply $2$-transitive group. Then 
$(X,(G_x)_{x\in X})$ is an improper Moufang set with little projective group $G^{\dagger} =G$.  
\item Let $k$ be a field, $X:=\mathbb{P}^1(k)$ and $U_x$ be the subgroup of $PSL_2(k) \leq Sym X$ induced by the
group of unipotent matrices that fix $x$. Then $(X, (U_x)_{x\in X}$ is a Moufang set with little projective
group $G^{\dagger} = PSL_2(k)$. It is proper iff $|k| \geq 4$. 
\end{enumerate}
 \end{example}
 The second construction can be generalized to weak quadratic Jordan divsion algebras. In \cite{DW} the author 
 showed the following.
 \begin{theorem}\label{weakJordan}
 Every weak quadratic Jordan division algebra defines a Mou-\\fang set $\mathbb{M}(J)$ with
 root groups isomorphic to $(J,+)$. The algebra $J$ is determined by
 $\mathbb{M}(J)$ up to isotopy. 
\end{theorem}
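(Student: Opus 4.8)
The plan is to realise $\mathbb{M}(J)$ as a Moufang set of the form $\mathbb{M}(U,\tau)$ in the sense of De Medts and Weiss, built from the group $U=(J,+)$ and a permutation $\tau$, and then to read the Jordan data back off from the Hua maps. First I would set $X:=J\cup\{\infty\}$ (so $|X|\geq 3$, as $1\neq 0$), and for $a\in J$ let $\alpha_a\in\Sym X$ fix $\infty$ and act by $x\mapsto x+a$ on $J$; then $U_\infty:=\{\alpha_a:a\in J\}$ is a subgroup of $\Sym X$ isomorphic to $(J,+)$, fixing $\infty$ and acting regularly on $X\setminus\{\infty\}$. Next define $\tau\in\Sym X$ by $0\tau=\infty$, $\infty\tau=0$ and $x\tau=-x^{-1}=-xQ_x^{-1}$ for $x\in J^{\#}$. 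Using that $Q$ is quadratic, so $Q_{-x}=Q_x$, together with $Q_{x^{-1}}=Q_x^{-1}$ from Remark \ref{remark1}(a), one checks $(x^{-1})^{-1}=x$ and $(-x)^{-1}=-x^{-1}$, whence $\tau$ is an involution. Finally set $U_0:=U_\infty^{\tau}$ and, for $y\in J^{\#}$, $U_y:=U_0^{\alpha_y}$. Here the division hypothesis is exactly what guarantees that $x\mapsto -x^{-1}$ is defined on all of $J^{\#}$, so that $\tau$ is a genuine permutation of $X$; axiom (a) then follows formally, since each $U_y$ is a conjugate of $U_\infty$ by a bijection of $X$ and hence fixes $y$ and acts regularly on $X\setminus\{y\}$.

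The heart of the argument is axiom (b), $U_z^g=U_{zg}$. Following the De Medts--Weiss calculus one associates to each $a\in J^{\#}$ the Hua map $h_a$, a permutation of $X$ fixing $0$ and $\infty$; concretely $h_a=\mu_a\mu_1^{-1}$, where $\mu_a$ is the unique element of $U_0\alpha_aU_0$ interchanging $0$ and $\infty$ and $\mu_1=\tau$. The key computation is that $xh_a=xQ_a$ for all $x\in J$; granting this, $h_a$ is $k$-linear, and is bijective because $a$ is invertible, hence an automorphism of $(J,+)$. The Moufang axioms then follow, via the De Medts--Weiss criterion, from the compatibility relations among the $\mu_a$ and $h_a$; the decisive one is the fundamental formula $Q_{bQ_a}=Q_aQ_bQ_a$, which in Hua-map form reads $h_{bh_a}=h_ah_bh_a$ and is precisely (QJ3), while (QJ2) furnishes the remaining relation and (QJ1) gives $h_1=\mathrm{id}$. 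Thus the three defining identities of a weak quadratic Jordan algebra are exactly what the construction requires, and each root group is by construction a conjugate of $U_\infty\cong(J,+)$.

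The point demanding care---and the reason the hypothesis is only \emph{weak}---is that the entire computation must be carried out inside $J$ itself and never in a scalar extension $J_K$. In the general theory one secures the Jordan axioms in extensions by invoking the linearised identities (QJ2*), (QJ3*), (QJ3**), which for small fields need not follow from (QJ1)--(QJ3). I would therefore arrange every step above so that only (QJ1)--(QJ3) and the consequences of Remark \ref{remark1}(a) (invertibility of $Q_a$ for $a\in J^{\#}$, the relation $Q_{a^{-1}}=Q_a^{-1}$, and invertibility of $aQ_b$) are used: each Hua relation is then an equality of two maps evaluated at fixed elements of $J$, not a polynomial identity that has to survive base change, and the division hypothesis supplies exactly the inverses that would otherwise be available only generically. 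Checking that no hidden appeal to strictness slips in is the main obstacle.

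For the last assertion I would recover $J$ from $\mathbb{M}(J)$ equipped with the ordered base points $(\infty,0,1)$: the additive group $(J,+)$ is $U_\infty$ acting regularly on $X\setminus\{\infty\}$, and the operator $Q_a$ is the Hua map $h_a$ normalised by $\mu_1=\tau$. Replacing the reference point $1$ by some $u\in J^{\#}$ renormalises the Hua maps to those of the isotope $J^u$; indeed, by the Lemma and the Proposition on isotopes, $J^u=(J,Q^u,u)$ is again a weak quadratic Jordan division algebra with $Q^u_b=Q_u^{-1}Q_b$ and unit $u$, so the different admissible normalisations of the base points yield precisely the isotopes of $J$. Conversely, any isomorphism $\mathbb{M}(J)\to\mathbb{M}(J')$ may, after composition with suitable elements of $\Gdag$, be assumed to fix $\infty$ and $0$ and to carry $1$ to some $u\in(J')^{\#}$; it then intertwines the additive groups and the Hua maps, hence is a Jordan isomorphism $J\cong(J')^{u}$. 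Therefore $\mathbb{M}(J)$ determines $J$ exactly up to isotopy, as claimed.
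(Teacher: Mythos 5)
Your proposal is essentially the paper's approach: the paper gives no proof of this theorem at all, but simply cites De Medts--Weiss \cite{DW} and observes that their argument never invokes strictness of (QJ1)--(QJ3), and your outline faithfully reconstructs exactly that construction ($X=J\cup\{\infty\}$, $\tau\colon x\mapsto -x^{-1}$, Hua maps acting as $Q_a$ via the identity of Theorem~\ref{Hua}, recovery up to isotopy by change of base point). The one step you leave as ``granting this''---that $xh_a=xQ_a$---is where the real work in \cite{DW} lies, but deferring it there is consistent with what the paper itself does.
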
    
De Medts and Weiss didn't use the concept of a weak quadratic Jordan algebra and formulated their theorem for 
quadratic Jordan division algebras, but their proof doesn't make use of the strictness of (QJ1)-(QJ3), so 
it also holds for weak Jordan division algebras.\\
One of the big open problems concerning Moufang sets is the following conjecture:
\begin{conjecture}\label{RGC}
If $(X,(U_x)_{x\in X})$ is a proper Moufang set with $U_x$ abelian for all $x \in X$, then there is a field
$k$ and a quadratic Jordan division algebra $J$ over $k$ such that $(X,(U_x)_{x\in X})$ is isomorphic to
$\mathbb{M}(J)$.
\end{conjecture}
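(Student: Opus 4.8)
This statement is the Root Group Conjecture (note the label \texttt{RGC}), a deep open problem, so what follows is a strategic outline rather than a complete argument; the point I want to stress is how the result announced in the abstract fits into such a strategy. The plan is to run the correspondence of Theorem \ref{weakJordan} in reverse. Given a proper Moufang set $(X,(U_x)_{x\in X})$ with every $U_x$ abelian, fix two points $0$ and $\infty$, use the regular action of $U_\infty$ on $X\setminus\{\infty\}$ to identify $X\setminus\{\infty\}$ with the abelian group $U_\infty$ written additively, and take $J:=U_\infty$ as the underlying additive group with $0$ as its zero. The permutation interchanging $0$ and $\infty$ supplies an ``inversion'' on $J$, and the $\mu$-maps $\mu_a$ together with the Hua maps $h_a=\mu_a\mu_1^{-1}$ supply the multiplicative data; following De Medts--Weiss one reads off a candidate quadratic operator $Q_a$ from the action of $h_a$ on $J$.

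The next task is to produce a field $k$ over which $J$ is a vector space and $a\mapsto Q_a$ is quadratic in the sense defined above. The field has to be extracted from the Moufang set itself; a natural candidate is the centralizer field in $\mathrm{End}(U_\infty)$ of the Hua subgroup generated by the maps $h_a$, and one must check that $U_\infty$ becomes a $k$-vector space on which each $Q_a$ acts $k$-quadratically, i.e. $Q_{ta}=t^2Q_a$ together with the existence of the bilinear $Q_{a,b}$. Here the hypotheses do real work: the abelian assumption is what makes $U_\infty$ a candidate vector space at all, and properness---the fact that $G^{\dagger}$ is not sharply $2$-transitive---is what should prevent the multiplicative structure from degenerating into the near-field (sharply $2$-transitive) situation and force genuinely Jordan-theoretic behaviour.

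With $(J,Q,1)$ in hand, where $1$ corresponds to the base point fixed in the definition of the $\mu$-maps, one verifies axioms (QJ1)--(QJ3) by translating each identity into a relation among $\mu$-maps and root-group elements and checking it from the Moufang relations (a) and (b) alone. (QJ1) and (QJ2) should follow fairly directly from the defining properties of the $\mu$-maps, whereas (QJ3), the fundamental formula $Q_{bQ_a}=Q_aQ_bQ_a$, carries the real content and would be the technical heart of the verification. The essential simplification is that only the \emph{weak} axioms need to be established: since the root groups act regularly, every non-zero element of $J$ is invertible by construction, so $J$ is at worst a weak quadratic Jordan division algebra, and the main result of this paper upgrades it to an honest quadratic Jordan division algebra. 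Theorem \ref{weakJordan} then recovers the original Moufang set as $\mathbb{M}(J)$ up to isotopy, closing the loop.

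The main obstacle is the construction of the field together with the quadratic $Q$ directly from abstract root-group data: there is no formal reason that a proper abelian Moufang set must carry any compatible field at all, and exhibiting $k$ and a $k$-quadratic $Q$ satisfying (QJ3) from the bare group structure is exactly the difficulty that keeps the conjecture open. What this paper contributes to the programme is the removal of one layer of that difficulty---one never has to verify strictness, only the weak identities---but the core problem of manufacturing the field and the quadratic form from the group alone is expected to require substantial new ideas beyond the formal manipulations sketched here.
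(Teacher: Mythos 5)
The statement you are asked about is Conjecture \ref{RGC}, which the paper explicitly does \emph{not} prove --- it states that ``in general this conjecture is still open'' and only contributes one ingredient towards it (the removal of the strictness requirement). So there is no proof in the paper to compare against, and your text, which is candidly framed as a strategic outline, is not a proof either: the gap is the entire content of the conjecture. Concretely, you defer exactly the steps that are open --- extracting a field $k$ from the root-group data, showing that $a\mapsto Q_a$ (i.e.\ the Hua maps $h_a$) is $k$-quadratic, meaning the biadditivity of $(a,b)\mapsto h_{a,b}=h_{a+b}-h_a-h_b$, and verifying (QJ2). No argument is offered for any of these, and ``a natural candidate is the centralizer field of the Hua subgroup'' is a guess, not a construction. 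A proposal that ends by saying the core problem ``is expected to require substantial new ideas'' cannot be accepted as a proof.

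Where your sketch can be checked against the paper's Section 5, it also mislocates the difficulty. The paper (following De Medts--Segev and Segev) records that for the candidate structure $(U,\mathcal{H},e)$ with $h_a=\mu_e\mu_a$, the axioms (QJ1) and (QJ3) are \emph{already known} to hold, together with $h_{a\tau}=h_a^{-1}$ and $h_{a\cdot s}=h_a\cdot s^2$; what remains open is precisely (QJ2) and the biadditivity of $h_{a,b}$ (Theorem \ref{QJ2} and the surrounding remarks). You assert the opposite: that (QJ1) and (QJ2) ``should follow fairly directly'' while (QJ3) ``carries the real content.'' You do, however, correctly identify the role of the paper's main theorem in the programme: once a weak quadratic Jordan division algebra is obtained, strictness comes for free, which is exactly the corollary the paper draws for $char\, U\in\{2,3\}$. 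That part of your account matches the paper; the rest is an accurate description of an open problem, not a solution to it.
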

If \ref{RGC} is true, then one has a classification of proper Moufang sets with abelian root groups since
quadratic Jordan division algebras have been classified by McCrimmon and Zel'manov (see \cite{McZ}). 
The proof follows from the classification of simple quadratic Jordan algebras over an algebraically 
closed field, therefore it is essential that scalar extensions are allowed. 
There has been progress in proving \ref{RGC} (see \cite{DS}), but in general this conjecture is still open. 
However, if there would be a weak quadratic Jordan division algebra which is not a quadatric Jordan algebra, 
then conjecture \ref{RGC} would be false. Such an algebra could exist over $\mathbb{F}_2$ or $\mathbb{F}_3$. 
In this paper we will prove that no such algebra exists.
\bigskip \\
\textbf{MAIN THEOREM} Every weak quadratic Jordan division algebra 
is a quadratic Jordan algebra.
\section{Some useful identities}
In the following let $(J,Q,1)$ be a weak quadratic Jordan algebra.
\begin{lemma}\label{QJ2'}
$yQ_{aQ_x,x} = aQ_{yQ_x,x}$ for all $a,x,y \in J$.
\end{lemma}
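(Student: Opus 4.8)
The plan is to deduce the identity directly from (QJ2) alone, showing that both sides reduce to a single symmetric expression; crucially, I would avoid invoking any linearized form of the axioms, since in a weak quadratic Jordan algebra the identities (QJ2*)--(QJ3**) need not be available. The whole content is that the lemma is a symmetric restatement of (QJ2) obtained by substituting $cQ_x$ into one slot of the bilinearization.

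First I would rewrite (QJ2) in pointwise form. Applying the operator identity $Q_a V_{a,b} = V_{b,a}Q_a$ to an arbitrary $c \in J$ and unwinding the definitions $wV_{u,v} = vQ_{u,w}$ and $cV_{b,a} = aQ_{b,c}$ gives $bQ_{a,cQ_a} = (aQ_{b,c})Q_a$ for all $a,b,c \in J$. Relabelling $a$ as $x$ produces the working identity $bQ_{x,cQ_x} = (xQ_{b,c})Q_x$, which is the only tool I need.

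Next I would specialize this twice. Taking $b=y$ and $c=a$, and using the symmetries $Q_{x,aQ_x}=Q_{aQ_x,x}$ and $Q_{y,a}=Q_{a,y}$ of the bilinearization, gives $yQ_{aQ_x,x} = (xQ_{a,y})Q_x$. Taking instead $b=a$ and $c=y$, and using $Q_{x,yQ_x}=Q_{yQ_x,x}$, gives $aQ_{yQ_x,x}=(xQ_{a,y})Q_x$. The two right-hand sides are literally identical, so comparing them yields $yQ_{aQ_x,x}=aQ_{yQ_x,x}$, as desired.

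The only point worth checking is that $(xQ_{b,c})Q_x$ is symmetric in $b$ and $c$, so that the two specializations land on exactly the same element; this is immediate from $Q_{b,c}=Q_{c,b}$. I expect no real obstacle here, and in particular no scalar extension or linearized axiom is used, which is why the statement survives in the merely weak setting.
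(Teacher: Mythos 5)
Your proof is correct and is essentially the paper's own argument: both reduce the claim to the pointwise form of (QJ2), namely $bQ_{x,cQ_x}=(xQ_{b,c})Q_x$, and then exploit the symmetry of $xQ_{b,c}Q_x$ in $b$ and $c$. The only difference is presentational — you carry out the two specializations explicitly, whereas the paper just observes that symmetry of the left-hand expression forces symmetry of the right.
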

\begin{proof}
(QJ2) implies
$xQ_{a,y} Q_x = y V_{a,x} Q_x =y Q_x V_{x,a}=aQ_{x,yQ_x} $. Since the first expression is symmetric in
$a$ and $y$, so is the second. Hence we get $aQ_{x,yQ_x} =yQ_{aQ_x,x}$.
\end{proof}
\begin{lemma}\label{identity} For all $x \in J$ we have
$Q_{x,1} =V_{x,1} =V_{1,x}$.
\end{lemma}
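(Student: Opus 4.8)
The plan is to prove the two equalities separately. The equality $V_{x,1}=V_{1,x}$ is the easy one: setting $a=1$ in (QJ2) and using (QJ1) gives $V_{1,b}=Q_1 V_{1,b}=V_{b,1}Q_1=V_{b,1}$ for every $b$, which on renaming $b$ to $x$ is exactly $V_{1,x}=V_{x,1}$.

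For the remaining equality $Q_{x,1}=V_{x,1}$ I would first unwind both sides against an arbitrary argument $c$. By the definition $cV_{a,b}=bQ_{a,c}$ we have $cV_{x,1}=1Q_{x,c}$, so the claim is equivalent to the trilinear identity $cQ_{x,1}=1Q_{x,c}$ for all $c,x\in J$. Writing $\langle a,b,c\rangle:=cQ_{a,b}$, which is linear in each slot and symmetric in the pair $a,b$, the target reads $\langle x,1,c\rangle=\langle x,c,1\rangle$; that is, I must show that the constant $1$ may be moved out of the symmetric pair into the applied slot.

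I would obtain this by chaining two specializations. First, specializing Lemma \ref{QJ2'} to $x=1$ (so that $aQ_1=a$ and $yQ_1=y$ by (QJ1)) yields $yQ_{a,1}=aQ_{y,1}$, i.e. $\langle x,1,c\rangle=\langle c,1,x\rangle$ after renaming. Second, applying the already-established $V_{1,x}=V_{x,1}$ to $c$ and unwinding both sides via $cV_{a,b}=bQ_{a,c}$ gives $xQ_{1,c}=1Q_{x,c}$, i.e. $\langle 1,c,x\rangle=\langle x,c,1\rangle$; combined with the $a,b$-symmetry $\langle c,1,x\rangle=\langle 1,c,x\rangle$ this produces $\langle c,1,x\rangle=\langle x,c,1\rangle$. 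Concatenating the two relations gives $\langle x,1,c\rangle=\langle c,1,x\rangle=\langle x,c,1\rangle$, which is the desired $cQ_{x,1}=1Q_{x,c}=cV_{x,1}$, and hence $Q_{x,1}=V_{x,1}=V_{1,x}$.

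The only real subtlety — and the step I expect to need the most care — is the bookkeeping of the three arguments: the maps $Q_{x,1}$ and $V_{x,1}$ differ precisely in whether $1$ or the free variable $c$ occupies the symmetric pair, and the argument hinges on recognizing that the two relations above together move $1$ through all three positions. Once the correct specializations of (QJ2) and of Lemma \ref{QJ2'} are identified, nothing beyond the definitions of $Q_{a,b}$ and $V_{a,b}$ and the symmetry of $Q_{a,b}$ in its two indices is required.
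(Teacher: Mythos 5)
Your proof is correct and follows essentially the same route as the paper: both obtain $V_{1,x}=V_{x,1}$ from (QJ2) with $a=1$ together with (QJ1), and both then reduce $Q_{x,1}=V_{x,1}$ to the identity $xQ_{1,c}=1Q_{x,c}$ combined with the symmetry $xQ_{1,c}=cQ_{1,x}$. The only cosmetic difference is that you import that symmetry from Lemma \ref{QJ2'} specialized at $x=1$, whereas the paper reads it off directly from $xQ_{1,c}=1Q_{x,c}$ and the symmetry of $Q_{x,c}$ in its two indices.
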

\begin{proof}
By (QJ2) we have
$V_{x,1} =V_{x,1}Q_1 = Q_1 V_{1,x} = V_{1,x}$. We have
$xQ_{1,y}= y V_{1,x} =yV_{x,1}=1 Q_{x,y}$ for all $y \in j$. Since the last expression is symmetric in $x$ 
and
$y$, so is the first. Thus we have $y V_{1,x} = xQ_{1,y} = yQ_{1,x} $.
\end{proof}
\begin{lemma}
\label{isotope}
If $a \in J^*$ and $b \in J$, then we have
$V_{x,a^{-1}}=V_{a,xQ_a^{-1}} =Q_a^{-1} Q_{x,a}$.
\end{lemma}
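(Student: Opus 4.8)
The statement packages two operator identities: the right-hand equality $V_{a,xQ_a^{-1}} = Q_a^{-1}Q_{x,a}$ and the left-hand equality $V_{x,a^{-1}} = V_{a,xQ_a^{-1}}$ (here I read the quantified element as $x \in J$). The plan is to prove them separately, and I expect the second of the two to carry all the genuine difficulty; the right-hand equality will be a one-line substitution into Lemma \ref{QJ2'}, whereas the left-hand equality is best obtained by passing to an isotope rather than by brute force.

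For $V_{a,xQ_a^{-1}} = Q_a^{-1}Q_{x,a}$ I would evaluate both sides on an arbitrary $c \in J$. The definition of $V$ gives $cV_{a,xQ_a^{-1}} = (xQ_a^{-1})Q_{a,c}$, while the right-hand side is $c\,Q_a^{-1}Q_{x,a} = (cQ_a^{-1})Q_{x,a}$, so it suffices to show $(xQ_a^{-1})Q_{a,c} = (cQ_a^{-1})Q_{x,a}$. This is exactly an instance of Lemma \ref{QJ2'}: writing that lemma as $y\,Q_{a'Q_{x'},x'} = a'\,Q_{y Q_{x'},x'}$, I would substitute $x' = a$, $a' = cQ_a^{-1}$ and $y = xQ_a^{-1}$, so that $a'Q_a = c$ and $yQ_a = x$ collapse the inner operators and produce $(xQ_a^{-1})Q_{c,a} = (cQ_a^{-1})Q_{x,a}$; the symmetry $Q_{c,a}=Q_{a,c}$ of the linearization then finishes it. The only thing to spot here is that setting the repeated variable of Lemma \ref{QJ2'} equal to $a$ lets $Q_a^{-1}$ cancel against the inner $Q_a$.

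The remaining equality $V_{x,a^{-1}} = V_{a,xQ_a^{-1}}$ is where I expect the real obstacle, and I would deliberately avoid a direct computation. Indeed, evaluating $cV_{x,a^{-1}} = a^{-1}Q_{x,c}$ on a test element $c$ produces an expression in which neither subscript of the outer $Q$ equals $a$, so Lemma \ref{QJ2'} no longer applies and one would be forced to assume $c$ invertible, which is not available in a general weak quadratic Jordan algebra. Instead I would pass to the isotope $J^a = (J,Q^a,a)$, which by the preceding Proposition is again a weak quadratic Jordan algebra, now with unit $a$. Applying Lemma \ref{identity} inside $J^a$ gives $V^a_{x,a} = V^a_{a,x}$, and the isotope formula $V^a_{b,c} = V_{b,cQ_a^{-1}}$ translates this back via $V^a_{x,a} = V_{x,aQ_a^{-1}} = V_{x,a^{-1}}$ and $V^a_{a,x} = V_{a,xQ_a^{-1}}$, where I use $a^{-1} = aQ_a^{-1}$. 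Combining the two displayed equalities yields the full chain. The conceptual crux is thus recognizing that the division-algebra-free identity $V_{x,1}=V_{1,x}$ of Lemma \ref{identity}, read inside the isotope, is the correct vehicle for the left-hand equality, where a direct approach stalls.
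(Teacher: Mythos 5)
Your proof is correct, and for the equality $V_{x,a^{-1}}=V_{a,xQ_a^{-1}}$ you use exactly the paper's device: apply Lemma \ref{identity} in the isotope $J^a$ and translate back with $V^a_{b,c}=V_{b,cQ_a^{-1}}$. The only divergence is in the other half. The paper takes the \emph{full} chain $V^a_{x,a}=V^a_{a,x}=Q^a_{a,x}$ from that single application of Lemma \ref{identity} in $J^a$, and the third term translates immediately to $Q_a^{-1}Q_{a,x}$ via $Q^a_{b,c}=Q_a^{-1}Q_{b,c}$ --- so the equality $V_{a,xQ_a^{-1}}=Q_a^{-1}Q_{x,a}$ comes for free from the same one-line argument, and your premise that the two halves have genuinely different difficulty is not quite right. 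Your alternative derivation of that half, substituting $x'=a$, $a'=cQ_a^{-1}$, $y=xQ_a^{-1}$ into Lemma \ref{QJ2'} so that the inner $Q_a$'s cancel, is nevertheless a valid computation (it is essentially a linearized form of what the isotope formalism encodes), and it has the mild virtue of showing that this particular equality needs neither the isotope construction nor the proposition that $J^a$ is again a weak quadratic Jordan algebra. The paper's version is shorter and more uniform; yours trades a little length for a more elementary route to one of the two equalities.
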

\begin{proof}
We apply \ref{identity} for the isotope $J^a$ and have
$$V^a_{x,a}=V^a_{a,x}=Q^a_{a,x}$$
and therefore
$$V_{x,a^{-1}}=V_{a,xQ_a^{-1}}=Q_a^{-1}Q_{a,x}.$$
\end{proof}
\begin{lemma}\label{2.4}
$Q_{1,x} Q_x =Q_x Q_{1,x}$ for all $x \in J$.
\end{lemma}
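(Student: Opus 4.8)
The plan is to obtain the commutation relation directly from axiom (QJ2), using Lemma \ref{identity} as the only extra input. The essential observation is that Lemma \ref{identity} collapses three a priori different operators into one: since $Q_{a,b}$ is symmetric in its arguments (immediate from the definition $cQ_{a,b}=cQ_{a+b}-cQ_a-cQ_b$), we have $Q_{1,x}=Q_{x,1}$, and Lemma \ref{identity} then gives $Q_{x,1}=V_{x,1}=V_{1,x}$. Thus all three maps $Q_{1,x}$, $V_{x,1}$ and $V_{1,x}$ coincide.

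With this identification in hand, I would specialize (QJ2), i.e.\ $Q_a V_{a,b}=V_{b,a}Q_a$, to the values $a=x$ and $b=1$. This yields $Q_x V_{x,1}=V_{1,x}Q_x$. Replacing $V_{x,1}$ on the left and $V_{1,x}$ on the right by $Q_{1,x}$, as justified in the previous paragraph, turns this identity into $Q_x Q_{1,x}=Q_{1,x}Q_x$, which is exactly the assertion of the lemma.

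I do not expect any genuine obstacle here: the entire content of the statement is the bookkeeping that lets (QJ2) be read as a commutation relation once the relevant $V$-operators are recognized as $Q_{1,x}$. The only points requiring (minor) care are the symmetry $Q_{1,x}=Q_{x,1}$ and the correct orientation of the subscripts in (QJ2), so that the two copies of $Q_{1,x}$ end up on opposite sides of $Q_x$; both are routine.
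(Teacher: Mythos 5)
Your proof is correct and is essentially identical to the paper's: both specialize (QJ2) to $a=x$, $b=1$ to get $Q_xV_{x,1}=V_{1,x}Q_x$ and then invoke Lemma \ref{identity} (together with the symmetry of $Q_{a,b}$) to rewrite both $V$-operators as $Q_{1,x}$. No issues.
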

\begin{proof}
We have $Q_x Q_{1,x} =Q_x V_{x,1} = V_{1,x} Q_x = Q_{1,x} Q_x$ by (QJ2) and \ref{identity}.
\end{proof}
\begin{lemma}\label{2.5}
If $x \in J^*$, then $Q_x^{-1} V_{a,x} = V_{x,a}Q_x^{-1} =Q_{a,x^{-1}}$ for all $a \in J$.
\end{lemma}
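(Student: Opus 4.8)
The plan is to prove the two claimed equalities separately, reading the first off from (QJ2) and the second off from Lemma~\ref{isotope}.

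First, the equality $Q_x^{-1} V_{a,x} = V_{x,a} Q_x^{-1}$ is immediate from (QJ2). Specializing (QJ2) to the pair $(x,a)$ gives $Q_x V_{x,a} = V_{a,x} Q_x$; since $x \in J^*$, multiplying by $Q_x^{-1}$ on both sides yields $V_{x,a} Q_x^{-1} = Q_x^{-1} V_{a,x}$. No further work is needed here.

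The substantive point is the identification with $Q_{a,x^{-1}}$, and the idea is to feed the \emph{inverse} $x^{-1}$ into Lemma~\ref{isotope} rather than $x$ itself, so that the exponent $-1$ occurring inside the $V$-term of that lemma cancels back to $x$. Recall from Remark~\ref{remark1} that $x^{-1}$ is again invertible, that $Q_{x^{-1}} = Q_x^{-1}$, and hence that $(x^{-1})^{-1} = x^{-1} Q_{x^{-1}}^{-1} = x^{-1} Q_x = x$. Applying the equality $V_{z,p^{-1}} = Q_p^{-1} Q_{z,p}$ of Lemma~\ref{isotope} with the invertible element taken to be $p := x^{-1}$ and with $z := a$ therefore gives
$$V_{a,x} = V_{a,(x^{-1})^{-1}} = Q_{x^{-1}}^{-1} Q_{a,x^{-1}} = Q_x Q_{a,x^{-1}}.$$
Multiplying on the left by $Q_x^{-1}$ produces $Q_x^{-1} V_{a,x} = Q_{a,x^{-1}}$, which, combined with the first step, completes the chain of equalities.

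I expect no genuine obstacle: once one sees that Lemma~\ref{isotope} should be invoked at $x^{-1}$, the rest is bookkeeping with $Q_{x^{-1}} = Q_x^{-1}$. The only things to be careful about are keeping track of which slot is the invertible one in Lemma~\ref{isotope} and confirming $(x^{-1})^{-1} = x$ before substituting.
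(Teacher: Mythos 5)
Your proof is correct, but it reaches the crucial identity by a genuinely different route than the paper. For the first equality $Q_x^{-1}V_{a,x}=V_{x,a}Q_x^{-1}$ both you and the paper do the same thing: conjugate (QJ2) by $Q_x^{-1}$. For the identification with $Q_{a,x^{-1}}$, however, the paper computes directly from (QJ3) together with Lemma~\ref{QJ2'}: writing $x=x^{-1}Q_x$ and linearizing (QJ3) gives $aQ_{yQ_x,x}=yQ_xQ_{a,x^{-1}}Q_x$, and the substitution $y\mapsto yQ_x^{-1}$ then yields $V_{x,a}=Q_{a,x^{-1}}Q_x$. You instead observe that the desired identity is essentially Lemma~\ref{isotope} evaluated at the invertible element $x^{-1}$: using $Q_{x^{-1}}=Q_x^{-1}$ and $(x^{-1})^{-1}=x$ from Remark~\ref{remark1}(a), the identity $V_{z,p^{-1}}=Q_p^{-1}Q_{z,p}$ with $p=x^{-1}$ reads $V_{a,x}=Q_xQ_{a,x^{-1}}$, which is the claim after left multiplication by $Q_x^{-1}$. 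This is a legitimate shortcut and is non-circular, since Lemma~\ref{isotope} is proved before Lemma~\ref{2.5} and does not use it; what it buys is the realization that Lemma~\ref{2.5} is just Lemma~\ref{isotope} transported under inversion, at the cost of routing through the isotope machinery (the proposition that $J^{x^{-1}}$ is again a weak quadratic Jordan algebra, plus Lemma~\ref{identity} applied in that isotope) rather than through a direct appeal to (QJ3). Your attention to the two prerequisites --- $Q_{x^{-1}}=Q_x^{-1}$ and $(x^{-1})^{-1}=x$ --- is exactly what makes the substitution legitimate.
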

\begin{proof}
We have
$aQ_{yQ_x,x}=yQ_{aQ_x,x} = yQ_x Q_{a,x^{-1}} Q_x$ 
for all $a \in J$ by (QJ3) and \ref{QJ2'}. Replacing $y$ by $yQ_x^{-1}$, we get
$yV_{x,a} = aQ_{y,x} = yQ_{a,x^{-1}} Q_x$.
Thus the second equation follows. The first now follows from (QJ2).
\end{proof}
\begin{lemma}\label{2.6} If $x,y \in J^*$, then we have
$$Q_x^{-1} Q_{x+y} Q_y^{-1} = Q_{x^{-1} +y^{-1}}.$$
\end{lemma}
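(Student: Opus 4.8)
The plan is to reduce the asserted operator identity to a single ``cross-term'' identity and then to dispatch that identity using the isotope lemmas already in hand. First I would expand both sides by bilinearity, using $Q_{u+v}=Q_u+Q_{u,v}+Q_v$. On the left this gives
$$Q_x^{-1}Q_{x+y}Q_y^{-1}=Q_x^{-1}(Q_x+Q_{x,y}+Q_y)Q_y^{-1}=Q_y^{-1}+Q_x^{-1}Q_{x,y}Q_y^{-1}+Q_x^{-1}.$$
On the right, expanding $Q_{x^{-1}+y^{-1}}$ and invoking Remark~\ref{remark1}(a), which yields $Q_{x^{-1}}=Q_x^{-1}$ and $Q_{y^{-1}}=Q_y^{-1}$, gives
$$Q_{x^{-1}+y^{-1}}=Q_x^{-1}+Q_{x^{-1},y^{-1}}+Q_y^{-1}.$$
Since the outer summands $Q_x^{-1}$ and $Q_y^{-1}$ agree on both sides, the whole lemma collapses to the single cross-term identity $Q_x^{-1}Q_{x,y}Q_y^{-1}=Q_{x^{-1},y^{-1}}$.

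Second, I would rewrite the middle operator $Q_{x,y}$ so that the leading factor $Q_x^{-1}$ cancels. Lemma~\ref{isotope} can be read as $Q_{u,a}=Q_aV_{u,a^{-1}}$; taking $a=x$, $u=y$ and using the symmetry $Q_{y,x}=Q_{x,y}$ gives $Q_{x,y}=Q_xV_{y,x^{-1}}$. Hence $Q_x^{-1}Q_{x,y}=V_{y,x^{-1}}$ and therefore $Q_x^{-1}Q_{x,y}Q_y^{-1}=V_{y,x^{-1}}Q_y^{-1}$. Finally, applying Lemma~\ref{2.5} in the form $V_{y,a}Q_y^{-1}=Q_{a,y^{-1}}$ with $a=x^{-1}$ gives exactly $V_{y,x^{-1}}Q_y^{-1}=Q_{x^{-1},y^{-1}}$, which is the reduced identity and completes the proof.

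There is no deep obstacle here; the argument is essentially bookkeeping, so the only points requiring care are bookkeeping points. One must respect the convention that all maps act on the right, so composition order in products like $Q_x^{-1}Q_{x,y}Q_y^{-1}$ is fixed and cannot be permuted casually. One must also choose the correct factorization: Lemma~\ref{isotope} offers both $Q_{x,y}=Q_xV_{y,x^{-1}}$ and $Q_{x,y}=Q_yV_{x,y^{-1}}$, and it is the first that lets $Q_x^{-1}$ cancel on the left while leaving a trailing $Q_y^{-1}$ precisely in the shape that Lemma~\ref{2.5} absorbs. Finally, all inverses occurring are legitimate because $x,y\in J^*$ and hence $x^{-1},y^{-1}\in J^*$ by Remark~\ref{remark1}(a); note that the whole computation stays inside $J$ and never appeals to strictness or to any scalar extension.
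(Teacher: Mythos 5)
Your proof is correct and follows essentially the same route as the paper: expand $Q_{x+y}$ by bilinearity, note the outer terms match via $Q_{x^{-1}}=Q_x^{-1}$, and reduce to the cross-term identity $Q_x^{-1}Q_{x,y}Q_y^{-1}=Q_{x^{-1},y^{-1}}$, which both you and the author obtain from the chain $Q_x^{-1}Q_{x,y}=V_{y,x^{-1}}$ followed by $V_{y,x^{-1}}Q_y^{-1}=Q_{x^{-1},y^{-1}}$ (Lemma~\ref{2.5}). The only cosmetic difference is that you cite Lemma~\ref{isotope} for the first link where the paper cites Lemma~\ref{2.5} applied at $x^{-1}$; these are the same identity.
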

\begin{proof}
Using the previous lemma for $x^{-1}$ and $y$, we have
$$Q_x^{-1} Q_{x,y} Q_y^{-1} = V_{y,x^{-1}}Q_y^{-1}=Q_{y^{-1},x^{-1}}.$$
Since $Q_x^{-1}Q_x Q_y^{-1} = Q_y^{-1}$ and $Q_x^{-1} Q_y Q_y^{-1} =Q_x^{-1}$, we get
$$Q_x^{-1} Q_{x+y} Q_y^{-1} = Q_x^{-1}(Q_x +Q_y +Q_{x,y})Q_y^{-1}=Q_y^{-1}+Q_x^{-1}+Q_{x^{-1},y^{-1}}=
Q_{x^{-1}+y^{-1}}.$$
\end{proof}
\bigskip \\
We will also make use of the following "Hua-identity" for weak quadratic Jordan division algebras. It was
proved by De Medts and Weiss in \cite{DW} in order to show that a quadratic Jordan division algebra defines
a Moufang set. As mentioned before, the proof doesn't make use of the strictness of (QJ1)-(QJ3), so it still
holds for weak quadratic Jordan division algebras.
\begin{theorem}\label{Hua}
Let $J$ be a weak quadratic Jordan divsion algebra and $a,b \in J^*$ with $a \ne b^{-1}$. Then we have
$$aQ_b =b-(b^{-1}-(b-a^{-1})^{-1})^{-1}.$$
\end{theorem}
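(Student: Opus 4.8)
The plan is to recast the scalar identity as an equation between $Q$-operators applied to a single element, and then to collapse it by peeling off one factor at a time. Set $c:=b-a^{-1}$; since $a\neq b^{-1}$ we have $c\neq 0$, so $c\in J^*$. Put $w:=b^{-1}-c^{-1}$. If $w=0$ then $b=c=b-a^{-1}$, forcing $a^{-1}=0$, which is impossible; hence $w\in J^*$ and the right-hand side $b-w^{-1}$ is well defined. Since $Q_w$ is invertible and $wQ_w^{-1}=w^{-1}$, the assertion $aQ_b=b-w^{-1}$ is equivalent to
$$(b-aQ_b)Q_w=w.$$
So I would prove this last equation, which matches the theorem because $w^{-1}=(b^{-1}-(b-a^{-1})^{-1})^{-1}$.

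First I would factor $Q_w$ by Lemma \ref{2.6}. Writing $w=b^{-1}+(-c)^{-1}$ and using $(-c)^{-1}=-c^{-1}$, $Q_{-c}=Q_c$, $b+(-c)=a^{-1}$, and $Q_{a^{-1}}=Q_a^{-1}$ (Remark \ref{remark1}), Lemma \ref{2.6} gives $Q_w=Q_b^{-1}Q_a^{-1}Q_c^{-1}$. Now I apply the three factors to $b-aQ_b$ in turn. The first factor acts cleanly, $(b-aQ_b)Q_b^{-1}=b^{-1}-a$, since $bQ_b^{-1}=b^{-1}$ and $aQ_bQ_b^{-1}=a$. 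Applying $Q_a^{-1}=Q_{a^{-1}}$ and using $aQ_{a^{-1}}=a^{-1}$ yields $b^{-1}Q_{a^{-1}}-a^{-1}$. The whole computation then reduces to showing $(b^{-1}Q_{a^{-1}}-a^{-1})Q_c^{-1}=w$.

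The decisive step is to rewrite $b^{-1}Q_{a^{-1}}-a^{-1}=b^{-1}Q_c-c$, for then $(b^{-1}Q_c-c)Q_c^{-1}=b^{-1}-c^{-1}=w$ finishes everything. Substituting $a^{-1}=b-c$ and linearizing, $b^{-1}Q_{a^{-1}}=b^{-1}Q_{b-c}=b-b^{-1}Q_{b,c}+b^{-1}Q_c$, so everything hinges on the single identity $b^{-1}Q_{b,c}=2c$. I expect this to be the main obstacle, but it is in fact short: the definition of $V$ gives $b^{-1}Q_{b,c}=cV_{b,b^{-1}}$, and Lemma \ref{isotope} (taken with $x=a=b$) together with $Q_{b,b}=2Q_b$ yields $V_{b,b^{-1}}=Q_b^{-1}Q_{b,b}=2\,id_J$. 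Plugging $b^{-1}Q_{b,c}=2c$ back in gives $b^{-1}Q_{a^{-1}}-a^{-1}=(b-2c+b^{-1}Q_c)-(b-c)=b^{-1}Q_c-c$, as needed.

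The genuine work is therefore conceptual rather than computational: finding the reformulation $(b-aQ_b)Q_w=w$ and the factorization $Q_w=Q_b^{-1}Q_a^{-1}Q_c^{-1}$ from Lemma \ref{2.6}. Once these are in place the three factors cascade, and the only auxiliary identity, $b^{-1}Q_{b,c}=2c$, is exactly where Lemma \ref{isotope} and the quadratic scaling $Q_{b,b}=2Q_b$ enter. I would stress that at no point is strictness of (QJ1)--(QJ3) used; only the weak axioms, the inverse formula $Q_{a^{-1}}=Q_a^{-1}$, and Lemmas \ref{isotope} and \ref{2.6} are invoked, which is consistent with the paper's overall aim.
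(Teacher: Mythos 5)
Your proof is correct, and I checked each step: $c=b-a^{-1}\in J^*$ and $w=b^{-1}-c^{-1}\in J^*$ are justified; the reduction of $aQ_b=b-w^{-1}$ to $(b-aQ_b)Q_w=w$ is valid since $Q_w$ is invertible and $w^{-1}Q_w=w$; the factorization $Q_w=Q_b^{-1}Q_{a^{-1}}Q_{-c}^{-1}=Q_b^{-1}Q_a^{-1}Q_c^{-1}$ is exactly Lemma \ref{2.6} with $x=b$, $y=-c$ (using $b+(-c)=a^{-1}$ and $Q_{a^{-1}}=Q_a^{-1}$); the two easy factors give $b^{-1}Q_{a^{-1}}-a^{-1}$; and the key identity $b^{-1}Q_{b,c}=cV_{b,b^{-1}}=cQ_b^{-1}Q_{b,b}=2c$ follows from Lemma \ref{isotope} and $Q_{b,b}=2Q_b$, which collapses the last factor to $b^{-1}-c^{-1}=w$. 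The comparison with the paper is, however, vacuous in one sense: the paper gives no proof of Theorem \ref{Hua} at all, but simply cites De Medts and Weiss and asserts that their argument nowhere uses strictness of (QJ1)--(QJ3). Your argument therefore does something the paper does not: it gives a short, self-contained derivation entirely from the paper's own Lemmas \ref{isotope} and \ref{2.6} and Remark \ref{remark1}(a), thereby verifying explicitly (rather than by appeal to an external proof) that only the weak axioms are needed. That is a genuine improvement in self-containedness; the only thing the citation-based route buys is brevity.
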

\section{Derivations and anti-derivations of weak quadratic Jordan algebras}
\begin{definition}
Let $J$ be a weak quadratic Jordan algebra and $\epsilon \in \{+,-\}$. 
A linear map $\delta: J\to J$ is called an $\epsilon$-derivation if $\delta(aQ_b) =
\epsilon \delta(a)Q_b +aQ_{b,\delta(b)}$ holds
for all $a,b \in J$.
\end{definition}
We will call the $+$-derivations just \textit{derivations} and the $-$-derivations \textit{anti-derivations}. 
\begin{example} Let $A$ be an associative algebra and $J \subseteq A$ a special quadratic Jordan algebra.
If $\delta:A \to A$ is a(n anti-)derivation of $A$ with $\delta(J) \leq J$, then $\delta$ induces a(n anti-)
derivation of $J$. Indeed, for $a,b \in J$ we have $\delta(aQ_b) =\delta(bab) = \delta(b)ab +
\epsilon b\delta(ab) =
\delta(b)ab +\epsilon b\delta(a) b +\epsilon^2 ba\delta(b) = \epsilon \delta(a)Q_b+aQ_{b,\delta(b)}$ 
with $\epsilon=+$ if $\delta$ is a derivation and $\epsilon=-$ for $\delta$ an anti-derivation.
\end{example}
\begin{lemma}\label{lemma}
Let $\delta$ be an $\epsilon$-derivation for $\epsilon =\pm $ and $a,b,c \in J$. Then we have:
\begin{enumerate}
\item $\delta(aQ_{b,c}) =
\epsilon \delta(a)Q_{b,c}+aQ_{\delta(b),c} +aQ_{b,\delta(c)}$ 
and
$\delta(aV_{b,c}) = \delta(a)V_{b,c} + aV_{\delta(b),c} + \epsilon aV_{b,\delta(c)}$ for all $a,b,c \in J$.
\item If $a \in J^*$, then $\delta(a^{-1})= -\epsilon \delta(a)Q_a^{-1}$.
\item The identity is an anti-derivation.
\item If $char k \ne 2$ and $\delta$ a derivation, then $\delta(1) =0$.
\item If $char k =2$ and $\delta$ is a derivation, then $Q_{1,\delta(1)}=0$.
\item If $char k \ne 2$ and $\delta$ is an anti-derivation, then
$\delta(a) =\frac{1}{2} aQ_{1,\delta(1)}$.
\end{enumerate}
\end{lemma}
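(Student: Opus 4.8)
The plan is to treat the defining identity $\delta(aQ_b)=\epsilon\delta(a)Q_b+aQ_{b,\delta(b)}$ as the single engine behind every part, feeding it two special inputs: its full polarization in $b$ for (a), and the substitution $b=1$ for most of the rest. Part (a) is pure bookkeeping: I would replace $b$ by $b+c$, expand $Q_{b+c}=Q_b+Q_c+Q_{b,c}$, use that $\delta$ is linear and that $(b,c)\mapsto Q_{b,c}$ is symmetric bilinear, and then subtract the instances of the defining identity at $b$ and at $c$. What survives is $\delta(aQ_{b,c})=\epsilon\delta(a)Q_{b,c}+aQ_{\delta(b),c}+aQ_{b,\delta(c)}$, the first formula. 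I would not prove the $V$-formula from scratch: writing $aV_{b,c}=cQ_{b,a}$, applying the $Q_{b,c}$-formula just obtained, and translating each term back through the same $V$--$Q$ dictionary yields it directly. Under this rewriting the outer argument and the last subscript exchange roles, which is exactly why the factor $\epsilon$ attaches to the last $V$-term rather than the first.

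The workhorse for (d)--(f) is the ``master relation'' obtained by setting $b=1$. Since $Q_1=\mathrm{id}$, the defining identity collapses to $(1-\epsilon)\delta(a)=aQ_{1,\delta(1)}$ for all $a$. For a derivation ($\epsilon=+$) the left side vanishes, so $aQ_{1,\delta(1)}=0$ for every $a$, i.e. $Q_{1,\delta(1)}=0$ as an operator; this is (e), and in fact it holds in every characteristic. For an anti-derivation ($\epsilon=-$) the relation reads $2\delta(a)=aQ_{1,\delta(1)}$, so dividing by $2$ gives (f). To deduce (d) from (e), I would apply the operator identity $Q_{1,\delta(1)}=0$ to the element $1$ while independently evaluating $1Q_{1,\delta(1)}$: Lemma \ref{identity} gives $Q_{1,\delta(1)}=V_{1,\delta(1)}$, and the definition of $V$ together with $Q_{1,1}=2Q_1$ yields $1Q_{1,\delta(1)}=\delta(1)Q_{1,1}=2\delta(1)$. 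Hence $2\delta(1)=0$, and $\mathrm{char}\,k\neq2$ forces $\delta(1)=0$. Part (c) is then a one-line check: for $\delta=\mathrm{id}$ and $\epsilon=-$ the required identity $aQ_b=-aQ_b+aQ_{b,b}$ holds precisely because $Q_{b,b}=2Q_b$, with no restriction on the characteristic.

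The step I expect to cost the most thought is (b), the only one involving the inverse and the only one calling on a genuinely structural earlier result. The plan is to differentiate the identity $a^{-1}Q_a=a$ (valid since $a^{-1}=aQ_a^{-1}$), which the defining identity turns into $\epsilon\,\delta(a^{-1})Q_a+a^{-1}Q_{a,\delta(a)}=\delta(a)$. Everything then hinges on the cross term $a^{-1}Q_{a,\delta(a)}$. Rewriting it as $\delta(a)V_{a,a^{-1}}$ and invoking Lemma \ref{isotope} with $x=a$ gives $V_{a,a^{-1}}=Q_a^{-1}Q_{a,a}=2\,\mathrm{id}$, whence $a^{-1}Q_{a,\delta(a)}=2\delta(a)$. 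Substituting back leaves $\epsilon\,\delta(a^{-1})Q_a=-\delta(a)$, and applying $Q_a^{-1}$ on the right together with $\epsilon^2=1$ gives $\delta(a^{-1})=-\epsilon\,\delta(a)Q_a^{-1}$. The real subtlety is recognizing that this cross term equals $2\delta(a)$ in every characteristic, so that the formula persists into characteristic $2$ (where it reads $\delta(a^{-1})=\epsilon\,\delta(a)Q_a^{-1}$); this is precisely where Lemma \ref{isotope} earns its keep.
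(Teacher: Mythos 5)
Your proposal is correct and follows essentially the same route as the paper: linearization for (a), differentiating the inverse relation for (b) (you use $a^{-1}Q_a=a$ with Lemma \ref{isotope} where the paper uses $aQ_{a^{-1}}=a^{-1}$ with Lemma \ref{2.5}, but both hinge on identifying the cross term as $2\delta(\cdot)$), and the substitution $b=1$ for (c), (e), (f). The only cosmetic deviation is that you obtain (d) from the operator identity in (e) applied to the element $1$ rather than from (b) via $\delta(1)=\delta(1^{-1})=-\delta(1)$; both are valid one-line arguments.
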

\begin{proof}
\begin{enumerate}
\item The first equation follows by linearizing the defining property of an $\epsilon$-derivation. The second 
equation 
can be obtained by the first.
\item We have $\delta(a^{-1}) =\delta(aQ_{a^{-1}}) = \epsilon \delta(a)Q_{a^{-1}} +aQ_{a^{-1},\delta(a^{-1})}$.
Now $aQ_{a^{-1},\delta(a^{-1})} =aV_{a,\delta(a^{-1})} Q_a^{-1}=\delta(a^{-1})Q_{a,a}Q_a^{-1} =
2\delta(a^{-1})$ by \ref{2.5}. Thus we get $-\delta(a^{-1})= \epsilon\delta(a)Q_{a^{-1}} =\epsilon\delta(a)Q_a^{-1}$.
\item We have $id(aQ_b) =aQ_b=-id(a)Q_b +a Q_{b,id(b)}$, which shows that the identity is an anti-derivation.
\item We have
$\delta(1) =\delta(1^{-1}) =-\delta(1)Q_1^{-1} =-\delta(1)$, thus the claim follows.
\item For all $a \in J$ we have
$$\delta(a) =\delta(aQ_1) =\delta(a)Q_1 +aQ_{1,\delta(1)}=\delta(a)+aQ_{1,\delta(1)},$$ hence the
claim follows. 
\item We have
$$\delta(a) =\delta(aQ_1) =-\delta(a)Q_1 +aQ_{1,\delta(1)} =-\delta(a)+aQ_{1,\delta(1)}$$
and thus
$\delta(a) =\frac{1}{2}aQ_{1,\delta(1)}$. 
\end{enumerate}
\end{proof}
We set $\mathfrak{D}_{\epsilon}(J):=\{\delta \in End_k(J); \delta$ is an $\epsilon$-derivation of $J\}$ and
$\mathfrak{D}(J) =\mathfrak{D}_1(J) +\mathfrak{D}_{-1}(J)$. If $char k =2$, then we have
$\mathfrak{D}_+(J) =\mathfrak{D}_{-}(J) =\mathfrak{D}(J)$, while for $char k\ne 2$ we have
$\mathfrak{D}(J) =\mathfrak{D}_+(J) \oplus \mathfrak{D}_{-}(J)$. We call the elements of
$\mathfrak{D}(J)$ {\it generalized derivations}.
\begin{lemma}
For $\epsilon_1,\epsilon_2 \in \{+,-\}$ we have
$[\mathfrak{D}_{\epsilon_1}(J),\mathfrak{D}_{\epsilon_2}(J)] \subseteq \mathfrak{D}_{\epsilon_1 \epsilon_2}(J)$.
Especially $\mathfrak{D}_+(J)$ and $\mathfrak{D}(J)$ are Lie subalgebras of $End_k(J)$, and if $char k \ne 
2$, then $\mathfrak{D}(J)$ is $Z_2$-graded.
\end{lemma}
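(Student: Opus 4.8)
The plan is to verify directly that $\delta := [\delta_1,\delta_2] = \delta_1\delta_2 - \delta_2\delta_1$ satisfies the defining identity of an $\epsilon_1\epsilon_2$-derivation, i.e. that $\delta(aQ_b) = \epsilon_1\epsilon_2\,\delta(a)Q_b + aQ_{b,\delta(b)}$ holds for all $a,b \in J$, where $\delta_i$ is an $\epsilon_i$-derivation. The only tools needed are the defining property of an $\epsilon$-derivation, the linearized identity $\delta_i(aQ_{b,c}) = \epsilon_i\delta_i(a)Q_{b,c} + aQ_{\delta_i(b),c} + aQ_{b,\delta_i(c)}$ from Lemma \ref{lemma}(a), the symmetry $Q_{u,v} = Q_{v,u}$ of the polarization, and the bilinearity of $(u,v) \mapsto Q_{u,v}$.

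First I would expand $\delta_1\delta_2(aQ_b)$. Applying $\delta_2$ first gives $\epsilon_2\delta_2(a)Q_b + aQ_{b,\delta_2(b)}$; then applying $\delta_1$ to the first summand through the defining property and to the second through the linearized identity (with $c=\delta_2(b)$) yields
\[
\delta_1\delta_2(aQ_b) = \epsilon_1\epsilon_2\,\delta_1\delta_2(a)Q_b + \epsilon_2\delta_2(a)Q_{b,\delta_1(b)} + \epsilon_1\delta_1(a)Q_{b,\delta_2(b)} + aQ_{\delta_1(b),\delta_2(b)} + aQ_{b,\delta_1\delta_2(b)}.
\]
The corresponding expression for $\delta_2\delta_1(aQ_b)$ is obtained by interchanging the indices $1$ and $2$ throughout.

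Then I would subtract the two expressions. The two middle cross-terms $\epsilon_2\delta_2(a)Q_{b,\delta_1(b)}$ and $\epsilon_1\delta_1(a)Q_{b,\delta_2(b)}$ occur with identical coefficients in both lines and cancel; the terms $aQ_{\delta_1(b),\delta_2(b)}$ and $aQ_{\delta_2(b),\delta_1(b)}$ cancel by the symmetry of the polarization; the leading terms combine to $\epsilon_1\epsilon_2[\delta_1,\delta_2](a)Q_b$; and by bilinearity in the second slot the final terms combine to $aQ_{b,[\delta_1,\delta_2](b)}$. What remains is precisely $[\delta_1,\delta_2](aQ_b) = \epsilon_1\epsilon_2\,[\delta_1,\delta_2](a)Q_b + aQ_{b,[\delta_1,\delta_2](b)}$, which is the claimed inclusion. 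Since the computation is really a bookkeeping of signs rather than a genuine difficulty, the only point demanding care is keeping track of which terms survive the subtraction and invoking the symmetry of $Q_{u,v}$ at the right moment.

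For the consequences, I would observe that taking $\epsilon_1 = \epsilon_2 = +$ gives $[\mathfrak{D}_+(J),\mathfrak{D}_+(J)] \subseteq \mathfrak{D}_+(J)$, so $\mathfrak{D}_+(J)$ is a Lie subalgebra of $End_k(J)$. All four instances of the inclusion land in $\mathfrak{D}_+(J) + \mathfrak{D}_-(J) = \mathfrak{D}(J)$, so $\mathfrak{D}(J)$ is a Lie subalgebra as well. When $char k \ne 2$ the sum $\mathfrak{D}(J) = \mathfrak{D}_+(J) \oplus \mathfrak{D}_-(J)$ is direct, and assigning degree $0$ to $\mathfrak{D}_+(J)$ and degree $1$ to $\mathfrak{D}_-(J)$ makes the established inclusion $[\mathfrak{D}_{\epsilon_1}(J),\mathfrak{D}_{\epsilon_2}(J)] \subseteq \mathfrak{D}_{\epsilon_1\epsilon_2}(J)$ read as addition of degrees modulo $2$, which is exactly the assertion that $\mathfrak{D}(J)$ is $Z_2$-graded.
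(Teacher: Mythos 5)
Your proof is correct and follows essentially the same route as the paper: expand $\delta_1\delta_2(aQ_b)$ and $\delta_2\delta_1(aQ_b)$ via the defining identity together with Lemma \ref{lemma}(a), then subtract and use the symmetry and bilinearity of $Q_{u,v}$ to identify the commutator as an $\epsilon_1\epsilon_2$-derivation. The only difference is that you spell out the (immediate) deduction of the Lie-subalgebra and grading statements, which the paper leaves implicit.
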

\begin{proof}
For $i=1,2$ let $\delta_i \in \mathfrak{D}_{\epsilon_i}(J)$. 
For $a,b\in J$ we have
$$\delta_1 (\delta_2(aQ_b)) =\delta_1( \epsilon_2 \delta_2(a)Q_b +aQ_{b,\delta_2(b)})=$$
$$ \epsilon_1 \epsilon_2 \delta_1(\delta_2(a))Q_b + \epsilon_2 \delta_2(a)Q_{b,\delta_1(b)}+
\epsilon_1 \delta_1(a)Q_{b,\delta_2(b)} +aQ_{\delta_1(b),
\delta_2(b)}+aQ_{\delta_1(\delta_2(b)),b}$$
and analogously 
 $$\delta_2 (\delta_1(aQ_b))=
 \epsilon_1 \epsilon_2 \delta_2(\delta_1(a))Q_b +
 \epsilon_1 \delta_1(a)Q_{b,\delta_2(b)} +\epsilon_2 \delta_2(a)Q_{b,\delta_1(b)}$$
 $$+aQ_{\delta_1(b),\delta_2(b)}+aQ_{\delta_2(\delta_1(b)),b}.$$
 Thus we get $$[\delta_1,\delta_2](aQ_b) = \epsilon_1 \epsilon_2 
 \delta_1(\delta_2(a))Q_b-\delta_2(\delta_1(a))Q_b+
 aQ_{\delta_1(\delta_2(b)),b}-aQ_{\delta_1(\delta_2(b)),b}=$$
 $$\epsilon_1 \epsilon_2 [\delta_1,\delta_2](a)Q_b +aQ_{[\delta_1,\delta_2](b),b}.$$
 Hence the claim follows.
\end{proof}
\begin{lemma}\label{Q_1,a} 
If $J$ is a quadratic Jordan algebra, then for all $a \in J$ the map $Q_{1,a}$ is an 
anti-derivation of $J$.
\end{lemma}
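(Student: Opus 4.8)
The plan is to verify the defining identity of an anti-derivation directly for the map $\delta := Q_{1,a}$. First note that $\delta$ is indeed a linear endomorphism of $J$, since $Q_{1,a}=Q_{1+a}-Q_1-Q_a \in End_k(J)$. By definition (with $\epsilon = -$), I must show that
$$\delta(cQ_b) = -\delta(c)Q_b + cQ_{b,\delta(b)}$$
holds for all $b,c \in J$, where $\delta(b)=bQ_{1,a}$ and $\delta(c)=cQ_{1,a}$. Reading both sides as operators applied to $c$ (recall the right-operator convention $(cQ_b)Q_{1,a}=c\,Q_bQ_{1,a}$), this is equivalent to the single operator identity
$$Q_b Q_{1,a} + Q_{1,a} Q_b = Q_{b,\,bQ_{1,a}}.$$

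The key observation is that this is nothing but the linearized fundamental formula (QJ3*) specialized to $a_1=1$ and $a_2=a$. Indeed, setting $a_1=1$ in (QJ3*) and invoking (QJ1) to replace $Q_{a_1}=Q_1$ by $id_J$, one has $bQ_{a_1}=b$ and $Q_{a_1,a_2}=Q_{1,a}$; the left-hand side $Q_{bQ_{a_1},\,bQ_{a_1,a_2}}$ collapses to $Q_{b,\,bQ_{1,a}}$, while the right-hand side $Q_{a_1,a_2}Q_bQ_{a_1}+Q_{a_1}Q_bQ_{a_1,a_2}$ becomes $Q_{1,a}Q_b+Q_bQ_{1,a}$. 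This is exactly the displayed identity (using that $Q_{x,y}$ is symmetric), so the anti-derivation property follows at once.

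The one point that requires care is where the hypothesis enters: (QJ3*) is available precisely because $J$ is assumed to be a quadratic Jordan algebra. By Remark \ref{remark1}(2), strictness of (QJ3) is equivalent to the extra validity of its linearizations, in particular (QJ3*), so the specialization above is legitimate already over $k$ itself. I do not expect a genuine obstacle here beyond bookkeeping with the right-operator notation; rather, the substantive remark is that the argument leans squarely on strictness through (QJ3*), which explains why the lemma is stated for (strict) quadratic Jordan algebras and signals the role strictness will play in the main theorem.
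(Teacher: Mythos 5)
Your proof is correct and follows exactly the same route as the paper: both reduce the anti-derivation identity for $Q_{1,a}$ to the operator equation $Q_bQ_{1,a}+Q_{1,a}Q_b=Q_{b,\,bQ_{1,a}}$ and recognize it as (QJ3*) specialized to $a_1=1$, $a_2=a$. Your write-up merely spells out the specialization (via (QJ1)) in more detail than the paper does.
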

\begin{proof}
$Q_{1,a}$ is an anti-derivation iff for all $b \in J$ we have
$$Q_{b}Q_{1,a}=-Q_{1,a} Q_b +Q_{b,bQ_{1,a}}.$$
But this is just identity (QJ3*) with $a_1=1$ and $a_2=a$ which holds by definition in quadratic Jordan 
algebras.
\end{proof}
\begin{remark} Let $J$ be a linear Jordan algebra. 
For $a,b,x \in J$ define the associator 
$\{a,x,b\}:=(a\cdot x)\cdot b-a \cdot (x \cdot b)$. Now 
by \ref{remark1}(d) $Q_{1,a}$ corresponds to the map 
$x \mapsto 2 a\cdot x$. Thus for all $a,b \in J$ the map $[Q_{1,a},Q_{1,b}]$ is a derivation of $J$.
Hence $$x[Q_{1,a},Q_{1,b}]=4 (b \cdot (a \cdot x) -a \cdot (b \cdot x)) =4 ((a \cdot x) \cdot b -
(a \cdot (x \cdot b) ))=4\{a,x,b\}.$$
Thus the map $x \mapsto \{a,x,b\}$ is a derivation of $J$.
Moreover, if $\delta$ is an anti-derivation of (the quadratic Jordan algebra) $J$, then
$\delta(a)=2 a \cdot \delta(1)$ by \ref{lemma}(f). One can easily prove that a linear
map $\delta: J \to J$ is a derivation of (the quadratic Jordan algebra) $J$ iff
$\delta(a\cdot b)=\delta(a)\cdot b + a\cdot \delta(b)$ for all $a,b \in J$, which is the usual definition of 
a derivation of a linear Jordan algebra.
\end{remark}
\begin{theorem}
Let $J$ be a weak quadratic Jordan algebra. Suppose that for all $a,y \in J^*$ there is a
generalized derivation $\delta$ with $\delta(a) =y$. 
Then $J$ is a quadratic Jordan algebra. 
\end{theorem}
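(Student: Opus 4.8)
The plan is to reduce the assertion to the linearized identities of \ref{remark1}(b) and then to produce those identities by differentiating the weak axioms (QJ2) and (QJ3) along generalized derivations. By \ref{remark1}(b) it is enough to verify (QJ2*) and (QJ3*) for all $a_1,a_2,b\in J$: (QJ1) holds strictly automatically, and (QJ3) together with (QJ3*) already forces (QJ3**). Both (QJ2*) and (QJ3*) are linear in $a_2$ for fixed $a_1$, and they hold trivially whenever $a_1=0$ or $a_2=0$, because $Q_0=0$ and $Q_{0,c}=0$. Since $J$ is a division algebra we have $J^{\#}=J^*$, so it suffices to establish both identities for $a_1,a_2\in J^*$.

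The key step is the claim that for every $\epsilon$-derivation $\delta$, every $a\in J$ and all $b$, the identities (QJ2*) and (QJ3*) hold with $a_1=a$ and $a_2=\delta(a)$. For (QJ3*) I would compute $\delta(xQ_{bQ_a})$ in two ways: first by treating $bQ_a$ as a single argument, using $\delta(bQ_a)=\epsilon\delta(b)Q_a+bQ_{a,\delta(a)}$; and second by writing $xQ_{bQ_a}=xQ_aQ_bQ_a$ via (QJ3) and applying the defining rule $\delta(cQ_d)=\epsilon\delta(c)Q_d+cQ_{d,\delta(d)}$ three times to the composition. Equating the two expressions and removing the common term $\epsilon\delta(x)Q_{bQ_a}$ leaves an operator identity whose $\delta(b)$-contributions are $\epsilon Q_{bQ_a,\delta(b)Q_a}$ on one side and $\epsilon Q_aQ_{b,\delta(b)}Q_a$ on the other; these cancel because polarizing (QJ3) in $b$ gives $Q_{bQ_a,cQ_a}=Q_aQ_{b,c}Q_a$ for all $c$. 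What survives is exactly (QJ3*) for the pair $(a,\delta(a))$, and, crucially, the sign $\epsilon$ has dropped out. The (QJ2*)-version is obtained in the same spirit by differentiating (QJ2) in the form $xQ_aV_{a,b}=xV_{b,a}Q_a$, using the product rules of \ref{lemma}(a) for $Q_{b,c}$ and for $V_{b,c}$; here the unwanted $\delta(b)$-terms cancel directly via (QJ2) applied to $\delta(b)$, and again the remainder is $\epsilon$-free and coincides with (QJ2*) for $(a,\delta(a))$.

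Because both identities are linear in $a_2$, they transfer from $\epsilon$-derivations to an arbitrary generalized derivation $\delta=\delta_++\delta_-\in\mathfrak{D}(J)$: apply the previous step to $\delta_+$ with $\epsilon=+$ and to $\delta_-$ with $\epsilon=-$, then add. Now I invoke the hypothesis: given $a_1,a_2\in J^*$, pick a generalized derivation $\delta$ with $\delta(a_1)=a_2$; then (QJ2*) and (QJ3*) hold for this pair $(a_1,a_2)$ and every $b$. Combined with the trivial cases $a_1=0$ and $a_2=0$, this yields (QJ2*) and (QJ3*) for all $a_1,a_2,b\in J$, so (QJ3**) follows from (QJ3) and (QJ3*), and \ref{remark1}(b) shows that $J$ is a quadratic Jordan algebra.

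I expect the genuine work to lie in the differentiation step of the second paragraph: one has to apply the product rule to the triple composition $Q_aQ_bQ_a$, keep track of every resulting term, and check that after using the polarized identity $Q_{bQ_a,cQ_a}=Q_aQ_{b,c}Q_a$ and (QJ2) the sign-bearing terms cancel, so that the outcome is independent of $\epsilon$ and reproduces the asymmetric linearizations (QJ2*) and (QJ3*) term for term rather than only their symmetric sums (which is all that a naive substitution $a\mapsto a_1+a_2$ would give over $\FF_2$).
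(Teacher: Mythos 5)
Your proposal is correct and follows essentially the same route as the paper: the paper's set $L_J(a)$ of all $y$ satisfying (QJ2*) and (QJ3*) with $a_1=a$, $a_2=y$ is a subspace (your linearity in $a_2$), and the two-way differentiation of $xQ_aQ_bQ_a$ and $xV_{b,a}Q_a$ is carried out exactly as you describe, with the same cancellations via the polarized identity $Q_{bQ_a,cQ_a}=Q_aQ_{b,c}Q_a$ and via (QJ2) applied to $\delta(b)$, and with $\epsilon$ dropping out. The only cosmetic difference is that you invoke the division-algebra property to identify $J^{\#}$ with $J^*$, which the theorem as stated does not assume but which the paper's own terse closing step ("thus the claim follows") implicitly relies on as well.
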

\begin{proof}
Let $a \in J$. We set
$$L_J(a):=\{y \in J: V_{b,y}Q_a +V_{b,a}Q_{a,y}=Q_a V_{y,b} +Q_{a,y} V_{a,b} {\rm \ and}$$
$$Q_{bQ_a,b_{a,y}} =Q_aQ_b Q_{a,y}+Q_{a,y}Q_b Q_a {\rm \ for \ all} \ b\in J\}.$$
Then $L_J(a)$ is a subspace of $J$. 
Now let $b,x \in J$. Then we have
$$\delta(xQ_a Q_b Q_a) =\epsilon \delta(x Q_a Q_b) Q_a +xQ_a Q_b Q_{a,\delta(a)} = $$ 
$$\delta(xQ_a)Q_b Q_a+
\epsilon xQ_a Q_{b,\delta(b)} Q_a +xQ_a Q_b Q_{a,\delta(a)} =$$
$$\epsilon \delta(x) Q_a Q_b Q_a+x Q_{a,\delta(a)} Q_b Q_a +\epsilon xQ_a Q_{b,\delta(b)} Q_a +xQ_a Q_b Q_{a,\delta(a)}.$$
On the other side,
$$\delta(x Q_a Q_b Q_a) =
 \delta(xQ_{bQ_a}) = \epsilon \delta(x)Q_{bQ_a} +xQ_{bQ_a,\delta(bQ_a)} =$$
 $$\epsilon \delta(x)Q_a Q_b Q_a +
xQ_{bQ_a,\epsilon \delta(b)Q_a}+xQ_{bQ_a,bQ_{a,\delta(a)}}=$$
$$\epsilon \delta(x) Q_b Q_a Q_b +\epsilon xQ_a Q_{b,\delta(b)} Q_a+xQ_{bQ_a,bQ_{a,\delta(a)}}.$$
Hence we get
$$Q_{a,\delta(a)} Q_b Q_a +Q_a Q_b Q_{a,\delta(a)} =Q_{bQ_a,bQ_{a,\delta(a)}}.$$
Moreover, we have
$$\delta(xV_{b,a} Q_a ) = \epsilon \delta(xV_{b,a})Q_a+xV_{b,a} Q_{a,\delta(a)} =$$
$$\epsilon \delta(x)V_{b,a}Q_a+xV_{b,\delta(a)}Q_a+\epsilon xV_{\delta(b),a}Q_a+xV_{b,a} Q_{a,\delta(a)}.$$
On the other side, we have
$$\delta(xV_{b,a}Q_a) =\delta(xQ_a,V_{a,b}) = \delta(xQ_a)V_{a,b} +xQ_aV_{\delta(a),b}+\epsilon 
xQ_aV_{a,\delta(b)}=$$
$$\epsilon \delta(x)Q_a V_{a,b} +xQ_{a,\delta(a)}V_{a,b}+xQ_aV_{\delta(a),b}+\epsilon xQ_a V_{a,\delta(b)}.$$
Thus we get
$$V_{b,\delta(a)}Q_a +V_{b,a} Q_{a,\delta(a)} = Q_a V_{\delta(a),b}+Q_{a,\delta(a)} V_{a,b}.$$
This shows that $\delta(a) \in L_J(a)$ for all $\delta \in \mathfrak{D}_{\epsilon}(J)$, $\epsilon=\pm$. 
Thus the claim follows.
\end{proof}
\section{The proof of the main theorem}
\begin{theorem}
Let $J$ be a weak Jordan division algebra, $\epsilon \in \{+,-\}$ and $\delta \in End_k(J)$ with $\delta(a^{-1}) =
-\epsilon \delta(a)Q_a^{-1}$ 
for all $a \in J^*$. Then $\delta$ is an $\epsilon$-derivation.
\end{theorem}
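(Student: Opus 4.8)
The plan is to read the statement as the converse of Lemma~\ref{lemma}(b): there the $\epsilon$-derivation property forced $\delta(a^{-1}) = -\epsilon\delta(a)Q_a^{-1}$, and now I want to recover the full identity $\delta(aQ_b) = \epsilon\delta(a)Q_b + aQ_{b,\delta(b)}$ from this inverse-compatibility alone. The defining identity involves the quadratic operator $Q_b$, over which the hypothesis gives no direct grip; the key idea is therefore to rewrite $aQ_b$ using \emph{only} the unary inversion map and the vector-space structure, on which $\delta$ is completely controlled by linearity together with the hypothesis. This is exactly what the Hua identity of Theorem~\ref{Hua} provides, so the whole proof will consist of applying $\delta$ to that identity and collapsing the result.

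Concretely, I would first dispose of the degenerate cases $a=0$ and $b=0$, where both sides vanish (here $\delta(0)=0$ and $Q_0=0$). Since $J$ is a division algebra every other element is invertible, so for $a,b\in J^*$ with $a\ne b^{-1}$ I set $u:=b-a^{-1}$ and $v:=b^{-1}-u^{-1}$. Note that $u\ne 0$ is precisely the Hua hypothesis $a\ne b^{-1}$, and then $v\ne 0$ follows automatically (otherwise $b=u$, i.e. $a^{-1}=0$), so all the inverses occurring below genuinely exist. Applying $\delta$ to $aQ_b=b-v^{-1}$ and using linearity and the hypothesis three times (on $v$, then on $b^{-1}$ and $u^{-1}$, then on $a^{-1}$ inside $u$), the element $a$ enters only through $a^{-1}$ in $u$, and I obtain $\delta(aQ_b)$ as a $\delta(a)$-term with operator coefficient $\epsilon Q_a^{-1}Q_u^{-1}Q_v^{-1}$ plus $\delta(b)$-terms with coefficient $\mathrm{id} - Q_b^{-1}Q_v^{-1} + Q_u^{-1}Q_v^{-1}$.

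The heart of the argument is to collapse these two coefficients. For the $\delta(a)$-part I would apply Lemma~\ref{2.6} with $x=b$, $y=-u$ (using $Q_{-u}=Q_u$ and $b+(-u)=a^{-1}$) to get $Q_v=Q_b^{-1}Q_a^{-1}Q_u^{-1}$, hence $Q_v^{-1}=Q_uQ_aQ_b$ and $Q_a^{-1}Q_u^{-1}Q_v^{-1}=Q_b$; this produces exactly the term $\epsilon\delta(a)Q_b$. For the $\delta(b)$-part I substitute the same relation to eliminate $Q_v^{-1}$, expand $Q_u=Q_b-Q_{b,a^{-1}}+Q_a^{-1}$, use Lemma~\ref{2.5} in the form $Q_{b,a^{-1}}Q_a=V_{a,b}$ and then (QJ2) in the form $Q_b^{-1}V_{a,b}Q_b=V_{b,a}$; the coefficient then reduces cleanly to $V_{b,a}$. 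Since $aQ_{b,\delta(b)}=\delta(b)V_{b,a}$ by the definition of $V$, this is precisely the missing term $aQ_{b,\delta(b)}$, completing the case $a\ne b^{-1}$.

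Finally I would treat the case $a=b^{-1}$ excluded by Hua, directly from the hypothesis. Here $b^{-1}Q_b=b$, so the left side is $\delta(b)$; on the right, the hypothesis gives $\epsilon\delta(b^{-1})Q_b=-\delta(b)$, while Lemma~\ref{isotope} yields $V_{b,b^{-1}}=Q_b^{-1}Q_{b,b}=2\,\mathrm{id}$, so $b^{-1}Q_{b,\delta(b)}=2\delta(b)$ and the right side is again $\delta(b)$. I expect the main obstacle to be the operator bookkeeping in the third paragraph — chaining Lemmas~\ref{2.6}, \ref{2.5} and (QJ2) in the right order so that both coefficients genuinely telescope — together with tracking the signs coming from the factor $\epsilon$ (recall $\epsilon^2=1$) and the repeated need to check that every inverse in the Hua chain is defined, which is exactly where the division-algebra hypothesis is indispensable.
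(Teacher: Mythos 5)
Your proposal is correct and follows essentially the same route as the paper: apply $\delta$ to the Hua identity, use Lemma~\ref{2.6} to factor $Q_{b^{-1}-(b-a^{-1})^{-1}}$ as $Q_b^{-1}Q_a^{-1}Q_{b-a^{-1}}$, and then collapse the $\delta(a)$- and $\delta(b)$-coefficients via Lemma~\ref{2.5} and (QJ2), finishing with the degenerate cases $b\in\{0,a^{-1}\}$. The only differences are cosmetic (you use a single factorization of $Q_v$ where the paper uses both, and Lemma~\ref{isotope} instead of Lemma~\ref{2.5} to get $V_{b,b^{-1}}=2\,\mathrm{id}$ in the excluded case).
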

\begin{proof}
Let $a,b \in J^*$ with $a \ne b^{-1}$. Then we have by the Hua-identity
$$aQ_b = b-(b^{-1}-(b-a^{-1})^{-1})^{-1}.$$
Thus we get
$$\delta(aQ_b) =\delta(b) -\delta(b^{-1}-(b-a^{-1})^{-1})^{-1})=$$
$$\delta(b) +\epsilon (\delta(b^{-1})-\delta((b-a^{-1})^{-1}))Q_{b^{-1}-(b-a^{-1})^{-1}}^{-1}=$$
$$\delta(b) +\epsilon (-\epsilon \delta(b) Q_b^{-1}+\epsilon \delta(b-a^{-1})Q_{b-a^{-1}}^{-1})
Q_{b^{-1}-(b-a^{-1})^{-1}}^{-1}=$$
$$\delta(b) -\delta(b) Q_b^{-1}Q_{b^{-1}-(b-a^{-1})^{-1}}^{-1} +\delta(b) Q_{b-a^{-1}}^{-1}
Q_{b^{-1}-(b-a^{-1})^{-1}}^{-1}+$$ 
$$\epsilon \delta(a)Q_a^{-1} Q_{b-a^{-1}}^{-1} Q_{b^{-1}-(b-a^{-1})^{-1}}^{-1}.$$
Now for $x= b^{-1}$ and $y = -(b-a^{-1})^{-1}$ we have by \ref{2.6}
$$Q_{b^{-1}-(b-a^{-1})^{-1}} = Q_{b^{-1}} Q_{b-(b-a^{-1})} Q_{-(b-a^{-1})^{-1}}=$$
$$Q_b^{-1} Q_a^{-1} Q_{b-a^{-1}}$$
and with $x = -(b-a^{-1})^{-1}$ and $y=b^{-1}$ we get
$$Q_{b^{-1}-(b-a^{-1})^{-1}}=Q_{-(b-a^{-1})^{-1}+b^{-1}} =Q_{b-a^{-1}}^{-1} Q_a^{-1} Q_b^{-1}.$$
Thus we get using \ref{2.5} and (QJ2)
$$\delta(aQ_b) =\delta(b) -\delta(b) Q_a Q_{b-a^{-1}} +\delta(b)Q_a Q_b +\epsilon \delta(a)Q_b=$$
$$\epsilon \delta(a)Q_b +\delta(b)Q_a (Q_{-a^{-1}}-Q_{b-a^{-1}}+Q_b) =\epsilon\delta(a)Q_b -
\delta(b)Q_a Q_{b,-a^{-1}}=$$
$$\epsilon \delta(a)Q_b +\delta(b)Q_a Q_{a^{-1},b} =\epsilon \delta(a) Q_b +\delta(b)V_{b,a} =$$
$$\epsilon \delta(a)Q_b +aQ_{b,\delta(b)}.$$
as desired. \\
We still have to prove $\delta(aQ_b) = \epsilon \delta(a)+ aQ_{b,\delta(b)}$ for $b \in \{0,a^{-1}\}$. 
The statement is clear for $b=0$, while we have by \ref{2.5}
$$\epsilon \delta(a)Q_{a^{-1}}+aQ_{a^{-1},\delta(a^{-1})}= \epsilon \delta(a)Q_a^{-1} +
a V_{a,\delta(a^{-1})} Q_a^{-1}=$$
$$-\epsilon^2 \delta(a^{-1})+\delta(a^{-1})Q_{a,a}Q_a^{-1}=- \delta(a^{-1})+2\delta(a^{-1})=
\delta(a^{-1})=\delta(aQ_{a^{-1}}).$$

\end{proof}

\begin{lemma}\label{4.2} Let $J$ be a weak quadratic Jordan division algebra.
Then for all $a \in J$ the map $\delta_a=Q_{1,a}$ is an anti-derivation of $J$.
\end{lemma}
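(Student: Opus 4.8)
The plan is to apply the theorem just proved, which states that a linear map $\delta$ is an $\epsilon$-derivation as soon as it satisfies $\delta(b^{-1}) = -\epsilon\,\delta(b)Q_b^{-1}$ for every $b \in J^*$. Since $\delta_a = Q_{1,a}$ is by construction an element of $End_k(J)$, it suffices to verify this single inverse-condition with $\epsilon = -$; that is, to show $\delta_a(b^{-1}) = \delta_a(b)Q_b^{-1}$ for all $a \in J$ and all $b \in J^*$. This is attractive because it sidesteps the strictness-dependent identity (QJ3*) that was used in the quadratic case (Lemma \ref{Q_1,a}), replacing it with the division hypothesis.

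First I would strip the parameter $a$ out of the way. By Lemma \ref{identity} we have $Q_{1,a} = V_{1,a}$, and the defining symmetry $xV_{1,a} = aQ_{1,x}$ then gives the compact formula $\delta_a(x) = xQ_{1,a} = aQ_{1,x}$ for every $x \in J$. Substituting $x = b^{-1}$ and $x = b$ turns the desired equation $\delta_a(b^{-1}) = \delta_a(b)Q_b^{-1}$ into $aQ_{1,b^{-1}} = aQ_{1,b}Q_b^{-1}$. Since this must hold for all $a \in J$, the whole statement reduces to the single operator identity $Q_{1,b^{-1}} = Q_{1,b}Q_b^{-1}$.

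This last identity is where the division structure enters, and it follows directly from the machinery of Section 2. Applying Lemma \ref{2.5} with $x = b$ and the lemma's free parameter set equal to $1$ yields $V_{b,1}Q_b^{-1} = Q_{1,b^{-1}}$, while Lemma \ref{identity} identifies $V_{b,1}$ with $Q_{1,b}$. Combining these gives $Q_{1,b}Q_b^{-1} = Q_{1,b^{-1}}$, exactly as required, and the proof is then complete by the preceding theorem.

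The step I expect to be the crux is the reduction in the middle paragraph rather than any lengthy computation. One cannot simply commute $Q_b^{-1}$ past $Q_{1,a}$ — Lemma \ref{2.4} grants commutation only in the degenerate case $a = b$ — so the inversion on $b$ must instead be transferred onto the slot that $b$ occupies inside $Q_{1,b}$. This transfer is precisely what the symmetry of $Q_{1,\cdot}$, together with the fixed argument $1$, makes available, and it is legitimate only because every nonzero element is invertible, so that the Hua-identity underlying the preceding theorem applies. Once the problem is recast as the operator identity $Q_{1,b^{-1}} = Q_{1,b}Q_b^{-1}$, Lemmas \ref{identity} and \ref{2.5} dispatch it immediately.
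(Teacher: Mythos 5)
Your proof is correct and takes essentially the same route as the paper: both verify the inverse condition $\delta_a(b^{-1})=\delta_a(b)Q_b^{-1}$ using Lemmas \ref{identity} and \ref{2.5} and then invoke the preceding theorem with $\epsilon=-$. The only difference is cosmetic: you isolate the operator identity $Q_{1,b^{-1}}=Q_{1,b}Q_b^{-1}$ before checking it, whereas the paper carries the test element through a single chain of equalities.
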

\begin{proof}
We have by \ref{identity}, \ref{2.4} and \ref{2.5}
$$\delta_a(x^{-1}) =x^{-1} Q_{1,a} = x^{-1} V_{1,a} =aQ_{1,x^{-1}} = aQ_x^{-1} V_{1,x} =$$
$$aV_{1,x} Q_x^{-1} = xQ_{1,a} Q_x^{-1} = \delta_a(x)Q_x^{-1}$$ 
for all $x \in J^*$.
\end{proof}
\begin{coro}\label{4.3} For all $a \in J^*$ and all $b\in J$ the map $Q_{a,b}^a =V_{a,b}^a =V_{b,a}^a$ is an anti-derivation 
of $J^a$.
\end{coro}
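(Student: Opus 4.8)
The plan is to obtain this statement as an immediate application of Lemma \ref{4.2} to the isotope $J^a$, in exactly the spirit in which Lemma \ref{isotope} was derived from Lemma \ref{identity}. We have already seen that $J^a=(J,Q^a,a)$ is again a weak quadratic Jordan algebra, and that its identity element is $a$. So the first thing I would verify is that $J^a$ is in fact a weak quadratic Jordan \emph{division} algebra. For $x \in J^{\#}$ we have $Q^a_x = Q_a^{-1}Q_x$, and since $a \in J^*$ and $J$ is a division algebra, both $Q_a^{-1}$ and $Q_x$ are invertible; hence $Q^a_x$ is invertible and $x \in (J^a)^*$. Thus every nonzero element of $J^a$ is invertible, so Lemmas \ref{identity} and \ref{4.2}, which are stated for weak quadratic Jordan division algebras, may legitimately be applied to $J^a$.

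Once $J^a$ is known to be a weak quadratic Jordan division algebra with identity $a$, I would apply Lemma \ref{4.2} to $J^a$, with the distinguished element playing the role of "$1$" being $a$ and the free parameter being $b$. This gives directly that for every $b \in J$ the map $Q^a_{a,b}$ is an anti-derivation of $J^a$.

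It then remains to identify this anti-derivation with $V^a_{a,b}$ and $V^a_{b,a}$. For this I would apply Lemma \ref{identity} to $J^a$, whose identity is $a$: it yields $Q^a_{b,a} = V^a_{b,a} = V^a_{a,b}$ for all $b \in J$. Since the linearized map $Q^a_{\cdot,\cdot}$ is symmetric in its two arguments, $Q^a_{a,b} = Q^a_{b,a}$, and so all of $Q^a_{a,b}$, $V^a_{a,b}$, $V^a_{b,a}$ coincide. Combining this with the previous paragraph completes the proof.

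I do not expect a serious obstacle here; the whole content is the recognition that the corollary is the specialization of Lemma \ref{4.2} to the isotope. The only point genuinely requiring care is the verification that $J^a$ is itself a division algebra, which legitimizes the application of the two earlier lemmas to $J^a$; everything else is transport of identities through the isotope, just as in Lemma \ref{isotope}.
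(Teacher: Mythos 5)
Your proposal is correct and is exactly the argument the paper intends: the corollary is stated without proof precisely because it is the specialization of Lemma \ref{4.2} to the isotope $J^a$, combined with Lemma \ref{identity} applied in $J^a$ to identify $Q^a_{a,b}$ with $V^a_{a,b}=V^a_{b,a}$. Your extra check that $J^a$ is again a \emph{division} algebra (via invertibility of $Q^a_x=Q_a^{-1}Q_x$) is a point the paper leaves implicit, and it is handled correctly.
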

For odd characteristic we can show that the converse of \ref{Q_1,a} holds. Thus \ref{4.2} implies that
a weak quadratic Jordan division algebra in odd characteristic  is a quadratic Jordan algebra. 
\begin{theorem}\label{linearjordanalgebra}
Let $J$ be a weak quadratic Jordan algebra over a field $k$ with $char k \ne 2$. 
Suppose that for all $a \in J$ the map $Q_{1,a}$ is an anti-derivation of $J$.
For $a,b \in J$ define $a \cdot b = \frac{1}{2} aQ_{1,b}$. Then $(J,+,\cdot)$ is 
a linear Jordan division algebra. Thus $J$ is a quadratic Jordan algebra.
\end{theorem}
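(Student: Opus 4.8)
The plan is to equip $J$ with the bilinear product $a\cdot b=\tfrac12 aQ_{1,b}$, verify it is a commutative unital algebra, reduce both the Jordan identity and the coincidence of $Q$ with the quadratic operator attached to $\cdot$ to a single operator identity, and then conclude with Remark \ref{remark1}(d). First, by \ref{identity} one has $aQ_{1,b}=1Q_{a,b}=bQ_{1,a}$, so $\cdot$ is commutative; since $Q_{1,1}=Q_{2\cdot 1}-2Q_1=2\,\mathrm{id}$ by (QJ1), the element $1$ is a unit, and bilinearity is immediate from the bilinearity of $(a,b)\mapsto Q_{a,b}$. Writing $a^2:=a\cdot a$, the relation $aQ_{1,a}=1Q_{a,a}=2\cdot 1Q_a$ gives $a^2=1Q_a$, and by \ref{identity} we have $Q_{1,a}=V_{1,a}=V_{a,1}$, so that $xQ_{1,a}=2(a\cdot x)$ and $L_a:=\tfrac12 Q_{1,a}$ is left multiplication by $a$.

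The key step, and what I expect to be the main obstacle, is the operator identity
$$Q_{1,a}^2=Q_{1,a^2}+2Q_a\qquad(\star).$$
I would prove $(\star)$ by applying the first identity of \ref{lemma}(a) to the anti-derivation $\delta=Q_{1,a}$ (an anti-derivation by hypothesis) with the arguments $b=1$ and $c=a$. Here $\delta(1)=1Q_{1,a}=aQ_{1,1}=2a$ and $\delta(a)=aQ_{1,a}=1Q_{a,a}=2a^2$, while $Q_{\delta(1),a}=Q_{2a,a}=2Q_{a,a}=4Q_a$ and $Q_{1,\delta(a)}=2Q_{1,a^2}$. Thus $\delta(xQ_{1,a})=-\delta(x)Q_{1,a}+xQ_{\delta(1),a}+xQ_{1,\delta(a)}$ reads $xQ_{1,a}^2=-xQ_{1,a}^2+4xQ_a+2xQ_{1,a^2}$, and dividing by $2$ (using $\mathrm{char}\,k\neq 2$) yields $(\star)$.

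From $(\star)$ everything follows quickly. Lemma \ref{2.4} gives $Q_aQ_{1,a}=Q_{1,a}Q_a$, and $[Q_{1,a}^2,Q_{1,a}]=0$ is trivial, so $[Q_{1,a^2},Q_{1,a}]=[Q_{1,a}^2-2Q_a,\,Q_{1,a}]=0$; since $Q_{1,a^2}=2L_{a^2}$ and $Q_{1,a}=2L_a$, this is $[L_{a^2},L_a]=0$, i.e. $a^2\cdot(b\cdot a)=(a^2\cdot b)\cdot a$, which (by commutativity) is exactly the Jordan identity (J). Moreover the quadratic operator $\tilde Q_a$ attached to the linear algebra $(J,\cdot)$ in Remark \ref{remark1}(d), namely $x\tilde Q_a=-a^2\cdot x+2a\cdot(a\cdot x)$, works out to $\tfrac12(Q_{1,a}^2-Q_{1,a^2})$, which by $(\star)$ is precisely $Q_a$.

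Having shown $(J,+,\cdot)$ is a commutative unital algebra satisfying (J), hence a linear Jordan algebra, I would finish as follows. By Remark \ref{remark1}(d) the pair $(J,\tilde Q)$ is a quadratic Jordan algebra, and $\tilde Q=Q$ by the previous paragraph, so the original $(J,Q)$ is a quadratic Jordan algebra. Since invertibility in the linear and quadratic structures agree (Remark \ref{remark1}(d)) and every nonzero $a$ has $Q_a$ invertible as $J$ is a division algebra, every nonzero element is invertible in $(J,\cdot)$, so $(J,+,\cdot)$ is a linear Jordan division algebra. The delicate point throughout is $(\star)$: naively one would expect to need a linearization of (QJ3), which is unavailable without strictness over $\mathbb F_3$, but the hypothesis that $Q_{1,a}$ is an anti-derivation supplies exactly the required linearized instance via \ref{lemma}(a).
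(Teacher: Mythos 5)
Your proof is correct, but it is organized differently from the paper's. The paper attacks the Jordan identity (J) head-on: it expands $a^2\cdot(a\cdot b)=\tfrac12\,1Q_aQ_{1,a\cdot b}$ and $(a^2\cdot b)\cdot a=\tfrac14\,1Q_aQ_{1,b}Q_{1,a}$ by pushing $Q_a$ past $Q_{1,a\cdot b}$ resp.\ $Q_{1,b}$ with the anti-derivation hypothesis (together with \ref{identity} and \ref{2.4}), and shows both sides reduce to the common expression $-(a\cdot b)Q_a+2\,a\cdot(a\cdot(a\cdot b))$. You instead isolate the single operator identity $Q_{1,a}^2=Q_{1,a^2}+2Q_a$, obtained by feeding $\delta=Q_{1,a}$ into the linearized anti-derivation formula of \ref{lemma}(a) with arguments $1$ and $a$ (a correct application: $\delta(1)=2a$, $\delta(a)=2a^2$, and dividing by $2$ is legitimate since $\mathrm{char}\,k\neq 2$), and then deduce (J) as the commutator statement $[Q_{1,a^2},Q_{1,a}]=[Q_{1,a}^2-2Q_a,\,Q_{1,a}]=0$ via \ref{2.4}. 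The ingredients are the same, but your decomposition buys something the paper leaves implicit: your identity shows at once that the quadratic operator $x\tilde Q_a=-a^2\cdot x+2a\cdot(a\cdot x)=\tfrac12 x(Q_{1,a}^2-Q_{1,a^2})$ attached to the linear algebra in Remark \ref{remark1}(d) coincides with the original $Q_a$, which is exactly what is needed to pass from ``$(J,\cdot)$ is a linear Jordan algebra'' to ``$(J,Q,1)$ is a quadratic Jordan algebra''; the paper's proof stops at the former and asserts the latter without comment. One minor caveat: the ``division'' part of the conclusion requires $J$ to be a division algebra, which is how the theorem is applied but is not literally among its hypotheses; your closing remark that invertibility in the linear and quadratic structures agrees is the right way to handle that step, and the paper's own proof does not address it at all.
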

\begin{proof}
We have $a \cdot b = \frac{1}{2} aQ_{1,b} =\frac{1}{2} aV_{1,b} = \frac{1}{2}bQ_{1,a} = b\cdot a$ 
by \ref{identity}, so
$\cdot$ is commutative. Moreover, we have $1\cdot a = a\cdot 1 =\frac{1}{2}aQ_{1,1} = \frac{1}{2} 2aQ_1 =a$, 
so
$1$ is the neutral element.
It remains to show that $a^2 \cdot (b\cdot a) = (a^2 \cdot b) \cdot a$ holds for all $a,b \in J$. Note that
$a^2 = \frac{1}{2}aQ_{1,a} = \frac{1}{2}1Q_{a,a} = 1Q_a$. Since $Q_{1,a\cdot b}$ 
is an anti-derivation, we have
$$a^2 \cdot (a\cdot b) = \frac{1}{2} 1Q_a Q_{1,a\cdot b} =-\frac{1}{2}1Q_{1,a\cdot b} Q_a +
\frac{1}{2} 1Q_{a,aQ_{1,a\cdot b}}=$$
$$-(a\cdot b)Q_a +\frac{1}{2}aQ_{1,2 a\cdot (a\cdot b)}=-(a\cdot b)Q_a +2 a \cdot (a \cdot (a\cdot b)).$$
Moreover, we have
$$(a^2 \cdot b) \cdot a = \frac{1}{4} 1Q_a Q_{1,b} Q_{1,a} =-\frac{1}{4}1Q_{1,b} Q_a Q_{1,a} +
\frac{1}{4} 1 Q_{a,aQ_{1,b}} Q_{1,a} = $$
$$-\frac{1}{2} b Q_{1,a} Q_a +\frac{1}{4} aQ_{1,2a \cdot b} Q_{1,a} = -(a\cdot b)Q_a+(a \cdot (a\cdot 
b))Q_{1,a}=$$
$$-(a\cdot b)Q_a +2(a \cdot (a\cdot b)) \cdot a = -(ab)Q_a +2 a\cdot (a\cdot (a\cdot b)).$$
Thus $(J,\cdot)$ is a linear Jordan algebra. 
\end{proof}
\bigskip \\
The following proof works for a field in arbitrary characteristic.
\begin{theorem}
A weak quadratic Jordan division algebra is a Jordan division algebra.
\end{theorem}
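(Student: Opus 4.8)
The plan is to deduce the theorem from the transitivity criterion at the end of Section~3: it asserts that $J$ is a quadratic Jordan algebra as soon as for all $a,y\in J^{*}$ there is a generalized derivation $\delta$ with $\delta(a)=y$. Indeed, by Remark~\ref{remark1}(b) it suffices to establish the linearizations (QJ2*) and (QJ3*), since (QJ3**) then follows; and the proof of that criterion shows precisely that a generalized derivation with $\delta(a)=y$ forces (QJ2*) and (QJ3*) for the pair $(a_{1},a_{2})=(a,y)$ (these are exactly the two defining conditions of $L_{J}(a)$). So everything reduces to the transitivity statement that $\mathfrak{D}(J)$ moves each $a\in J^{*}$ onto every $y\in J^{*}$. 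Note that for $|k|\ge 4$ this is unnecessary by Remark~\ref{remark1}(b), and for ${\rm char}\,k\ne 2$ it is already covered by Theorem~\ref{linearjordanalgebra}; the point of the present argument is a uniform treatment, the genuinely new case being $k=\mathbb{F}_{2}$.

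First I would collect the generalized derivations at hand. Lemma~\ref{4.2} provides the anti-derivations $Q_{1,c}=V_{1,c}$ for every $c\in J$, the identity is an anti-derivation by Lemma~\ref{lemma}(c), and brackets of anti-derivations are derivations by the grading lemma of Section~3. Evaluating $Q_{1,c}$ at $a$ gives, via Lemma~\ref{identity}, $aQ_{1,c}=1Q_{a,c}=cQ_{1,a}$, so as $c$ varies these anti-derivations realize the whole subspace ${\rm im}(Q_{1,a})$ as values $\delta(a)$, while the identity contributes $ka$. Thus the orbit $\mathfrak{D}(J)\cdot a$ already contains $ka+{\rm im}(Q_{1,a})$ together with all bracket values, and the task is to show this orbit exhausts $J$.

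The decisive tool is the inversion criterion proved at the start of Section~4: a linear map is an $\epsilon$-derivation precisely when $\delta(x^{-1})=-\epsilon\,\delta(x)Q_{x}^{-1}$ holds for all $x\in J^{*}$. It converts the search for a derivation with a prescribed value into the construction of a single linear map obeying one global inversion law, and it is here that the Hua identity (Theorem~\ref{Hua}) enters, exactly as it did in the proofs of that criterion and of Lemma~\ref{4.2}. Since $J$ is a quadratic Jordan algebra if and only if each of its isotopes is, I am also free to apply the transitivity criterion to any isotope $J^{a}$; there the unit is $a$ and Corollary~\ref{4.3} already furnishes the anti-derivations $V_{a,c}$, which should make the construction of derivations with prescribed values more transparent. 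Feeding the Hua identity into the inversion criterion, I would manufacture for an arbitrary target $y$ a generalized derivation sending $a$ to $y$.

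The hard part is characteristic~$2$. There the operators $Q_{1,a}=V_{1,a}$ degenerate: $Q_{1,1}=2\,{\rm id}=0$ and, by Lemma~\ref{identity}, $aQ_{1,a}=1Q_{a,a}=2\cdot 1Q_{a}=0$, so neither the anti-derivations $Q_{1,c}$ nor their brackets need move $a$ onto a spanning set. One sees this already in the special algebra $\mathbb{F}_{4}$ over $\mathbb{F}_{2}$, where every $Q_{\cdot,\cdot}$ (hence every $V$) vanishes, yet transitivity still holds, being forced entirely by the $\mathbb{F}_{4}$-scalar multiplications; these are invisible to the $Q_{1,c}$ but are recognized as derivations by the inversion criterion. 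The crux is therefore to produce, in general, enough such extra generalized derivations and to verify their inversion identity through the Hua identity. This is the step I expect to demand the real work, and it is exactly where the division hypothesis on $J$ is indispensable.
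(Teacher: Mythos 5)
There is a genuine gap: your entire argument funnels through the claim that the generalized derivations act transitively on $J^{\#}$, i.e.\ that for all $a,y\in J^*$ some $\delta\in\mathfrak{D}(J)$ has $\delta(a)=y$, and you never establish this. You correctly observe that the derivations actually available from the paper's machinery --- the anti-derivations $Q_{1,c}$ of \ref{4.2}, the identity map, and their brackets --- only give you $ka+{\rm im}(Q_{1,a})$ plus bracket values, and that in characteristic $2$ these can degenerate badly (your $\mathbb{F}_4/\mathbb{F}_2$ example, where every $Q_{x,y}$ vanishes). At that point you say the crux is to ``produce enough extra generalized derivations'' via the inversion criterion and the Hua identity \ref{Hua}, but no construction is given: producing a linear map with a prescribed value at $a$ that satisfies the global functional equation $\delta(x^{-1})=-\epsilon\,\delta(x)Q_x^{-1}$ for \emph{all} $x\in J^*$ is precisely the hard content, and it is not clear it can be done in general (in your $\mathbb{F}_4$ example it works only because of the accidental presence of $\mathbb{F}_4$-scalar multiplications, which have no analogue in an arbitrary weak quadratic Jordan division algebra over $\mathbb{F}_2$). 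So the proposal reduces the theorem to an unproved, and possibly harder, statement.

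The paper avoids transitivity altogether. Its proof takes the anti-derivations $Q^c_{b,c}=V^c_{b,c}$ of the isotopes $J^c$ (Corollary \ref{4.3}, which does follow from \ref{4.2} as you note), writes out the anti-derivation identity as an operator equation in $J^c$, translates back to $J$ via the isotope formulas, and then recovers the linearized axioms for \emph{arbitrary} triples by substitutions such as $a\mapsto aQ_c$ and $c\mapsto cQ_a$ together with \ref{isotope} and (QJ2); for (QJ2*) it adds two such identities, one obtained in $J^c$ and one in $J^a$. The free parameters needed to reach a general pair $(a_1,a_2)$ in (QJ2*) and (QJ3*) thus come from varying the isotope and the auxiliary element $b$, not from moving $a$ around by a derivation group. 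This is the idea your proposal is missing; the transitivity criterion at the end of Section~3 is a plausible-looking but ultimately different (and in the paper unused) route, and without a proof of transitivity your argument does not close.
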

\begin{proof}
We have to show (QJ2*) and (QJ3*), the first only for $k=\mathbb{F}_2$. 
Since these equalities automatically hold if one of the 
elements involved is zero, we only have to show them for non-zero elements.
So let $a,b,c \in J^*$. Since $Q_{a,b}^c$ is an anti-derivation of $J^c$ by \ref{4.3}, we have
$$Q^c_a Q^c_{b,c}=-Q^c_{b,c}Q^c_a + Q^c_{aQ^c_{b,c},a},$$
hence
$$Q_c^{-1} Q_a Q_c^{-1} Q_{b,c} =-Q_c^{-1} Q_{b,c} Q_c^{-1} Q_a + Q_c^{-1} Q_{aQ_c^{-1} Q_{b,c},a}.$$
Multiplying $Q_c$ on the left yields
$$Q_a Q_c^{-1} Q_{b,c} +Q_{b,c} Q_c^{-1} Q_a + Q_{aQ_c^{-1} Q_{b,c},a}.$$
Replacing $a$ by $aQ_c$ and applying (QJ3) yields
$$Q_c Q_a Q_{b,c} +Q_{b,c} Q_a Q_c = Q_{aQ_{b,c},aQ_c}.$$
This shows (QJ3*).\\
We now show (QJ2*). Since $Q_{b,c}^c =V_{b,c}^c =V_{c,b^c}$ is an anti-derivation of $J^c$, we have
$$Q_a^c V_{b,c}^c =- V_{b,c}^c Q_a +Q^c_{aV_{b,c}^c,a}$$ 
and thus
$$Q_c^{-1} Q_a V_{b,c^{-1}} = -V_{b,c^{-1}} Q_c^{-1} Q_a +Q_c^{-1} Q_{aV_{b,c^{-1}},a}.$$
Replacing $c$ by $c^{-1}$ and applying (QJ2) yields
$$Q_c Q_a V_{b,c} = -Q_c V_{c,b} Q_a +Q_c Q_{cQ_{b,a},a}.$$
Multiplying $Q_c^{-1}$ on the left yields
$$(*) \hspace{2cm} Q_a V_{b,c} =-V_{c,b} Q_a+Q_{cQ_{b,a},a}.$$
Moreover, since $Q_{a,c}^a =V_{a,c}^a=V_{c,a}^a$ is an anti-derivation of $J^a$, we have
$$Q_{a,b}^a V^a_{a,c} = -V^a_{a,c} Q_{a,b}^a +Q^a_{aV_{a,c}^a,b} +Q^a_{a,bV^a_{a,c}}=
-V^a_{c,a} Q_{a,b}^a +Q^a_{aV_{a,c}^a,b} +Q^a_{a,bV^a_{a,c}}.$$
Hence we have
$$Q_a^{-1}Q_{a,b} V_{a,cQ_a^{-1}} =-V_{c,a^{-1}} Q_a^{-1} Q_{a,b} +Q_a^{-1} 
Q_{2c,b}+Q_a^{-1}Q_{a,bV_{a,cQ_a^{-1}}}.$$
Using (QJ2) and multiplying $Q_a$ on the left yields
$$Q_{a,b} V_{a,cQ_a^{-1}} =-V_{a^{-1},c} Q_{a,b} +2Q_{c,b} +Q_{a,cQ_a^{-1} Q_{a,b}}.$$
Replacing $c$ by $cQ_a$ yields
$$Q_{a,b} V_{a,c} =-V_{a^{-1},cQ_a} Q_{a,b} +2Q_{cQ_a,b} +Q_{a,cQ_{a,b}}.$$
By \ref{isotope} we have 
$V_{a^{-1},cQ_a} =V_{c,a}$. Hence we get
$$(\dagger) \hspace{2cm} Q_{a,b} V_{a,c} =-V_{c,a} Q_{a,b} +2Q_{cQ_a,b} +Q_{a,cQ_{a,b}}.$$
Adding $(*)$ and $(\dagger)$ yields
$$Q_{a,b} V_{a,c} + Q_a V_{b,c} = -V_{c,b} Q_a -V_{c,a} Q_{a,b} +2Q_{cQ_a,b} +2Q_{a,cQ_{a,b}}.$$
This gives (QJ2*) for $char k=2$.   
\end{proof}  
\section{Application for Moufang sets}
Let $\mathbb{M}=(X, (U_x)_{x\in X})$ be a proper Moufang set with abelian root groups.
$\mathbb{M}$ can be written in the form $M(U,\tau)$ with $U$ an (additively written) group isomorphic 
to a root group of $\mathbb{M}$ and $\tau$ a permutation of $U \cup \{\infty\}$ interchanging 
$0$ and $\infty$, where $\infty$ is a symbol not contained in $U$. \\
 By \cite{S} $\mathbb{M}$ is special, so by \cite{Tim}, Thm. 5.2(a) $U$ is 
either torsion free and uniquely divisible or an elementary-abelian $p$-group for a prime $p$. 
We write $char U =0$ in the first case and $char U =p$ in the second. We can view $U$ as a $k$-vectorspace 
for $k =\mathbb{Q}$ if $char U =0$ and $k =\mathbb{F}_p$ if $char U =p$. \\
In order to give $U$ the structure of a quadratic Jordan algebra, we need a 
quadratic map between $U$ and $End_k(U)$. There is a natural candidate for this map. Choose $e \in U^{\#}
=U \setminus \{0\}$ and set $h_a:=\mu_e \mu_a$ for $a \in U^{\#}$ and $h_0 =0$. Let $\mathcal{H}: U \to
End_k(U); a \mapsto h_a$ 
(see \cite{DS} for the definition of $\mu_a$). Then $(U,\mathcal{H},e)$ satisfies (QJ1) and (QJ3) and one has
$h_{a\tau} =h_a^{-1}$ and $h_{a \cdot s} = h_a \cdot s^2$ for $a \in U$ and all $s \in k$. Moreover, if $(U,\mathcal{H},e)$ is a quadratic Jordan divsion algebra, then $\mathbb{M} \cong \mathbb{M}(U,\tau)$. 
It remains to show that (QJ2) holds and 
that the map $(a,b) \mapsto h_{a,b}= h_{a+b}-h_a -h_b$ is biadditive. In \cite{DS} the authors showed 
the following:
\begin{theorem}\label{QJ2}
If $char U \ne 2,3$ and if (QJ2) holds, then $(U,\mathcal{H},e)$ is a quadratic Jordan division algebra.
\end{theorem}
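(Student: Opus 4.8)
The plan is to isolate the one missing ingredient. By hypothesis (QJ1) and (QJ3) already hold for $(U,\mathcal H,e)$, we are assuming (QJ2), and we know $h_{a\cdot s}=h_a\cdot s^2$ and $h_{a\tau}=h_a^{-1}$. Consequently $\mathcal H$ is homogeneous of degree $2$, and every $a\in U^{\#}$ is invertible with $h_a^{-1}=h_{a\tau}$, so the division property is automatic. The \emph{only} thing separating $(U,\mathcal H,e)$ from a weak quadratic Jordan division algebra is therefore that $\mathcal H$ be a genuine quadratic map, i.e.\ that the symmetric polarization $(a,b)\mapsto h_{a,b}=h_{a+b}-h_a-h_b$ be biadditive ($k$-bilinearity is then automatic). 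Once this is shown, $(U,\mathcal H,e)$ is a weak quadratic Jordan division algebra, and the Main Theorem — or, since $\mathrm{char}\,k\neq 2$, the combination of \ref{4.2} and \ref{linearjordanalgebra} — promotes it to a quadratic Jordan division algebra. So the entire content is biadditivity of $h_{a,b}$.

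To attack biadditivity I would first record the linearization of (QJ3). Replacing $b$ by $b+c$ in $h_{bh_a}=h_ah_bh_a$ and using that $h_a$ is linear, the pure terms cancel by (QJ3) itself and one is left with the transformation law
\[
h_{b h_a,\,c h_a}=h_a\,h_{b,c}\,h_a\qquad(\star)
\]
for all $a,b,c$. Together with the homogeneity identity $h_{sb,sc}=s^2h_{b,c}$ (immediate from $h_{sx}=s^2h_x$) and the assumed (QJ2), $(\star)$ says that the form $h_{\cdot,\cdot}$ is equivariant for the Hua maps; in particular, since $h_{te}=t^2\,\mathrm{id}$, the scalar squares act as Hua maps. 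The goal is to deduce that the third difference
\[
T(a,b,c)=h_{a+b+c}-h_{a+b}-h_{a+c}-h_{b+c}+h_a+h_b+h_c
\]
vanishes. A direct expansion shows $T(a,b,c)=h_{a,b+c}-h_{a,b}-h_{a,c}$, so $T\equiv 0$ is exactly additivity of $b\mapsto h_{a,b}$; the symmetry $h_{a,b}=h_{b,a}$ then yields full biadditivity.

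Here is where I would bring in specialness. By \cite{S} the Moufang set $\mathbb M$ is special, and the structure theory of special Moufang sets with abelian root groups presents each Hua map through an associative envelope in which $h_a$ acts, up to the identifications above, like a two-sided multiplication $x\mapsto a x a$; its polarization is then $h_{a,b}\colon x\mapsto axb+bxa$, which is visibly biadditive in $(a,b)$. The role of $(\star)$, (QJ2), $h_{a\tau}=h_a^{-1}$ and homogeneity is to make this identification rigid enough to transport biadditivity back to $\mathcal H$, and this is precisely the step that consumes the hypothesis $\mathrm{char}\,U\neq 2,3$: separating the symmetric part of a quadratic datum requires dividing by $2$, while resolving the genuinely trilinear contribution hidden in $T$ — the obstruction that survives mere degree-$2$ homogeneity over a prime field — requires dividing by $3$. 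Over $\mathbb F_2$ or $\mathbb F_3$ both inversions fail, which is exactly the gap that the present paper's Main Theorem is designed to sidestep by other means.

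The step I expect to be the real obstacle is biadditivity itself, i.e.\ the vanishing of $T$. Homogeneity of degree $2$ alone is far too weak — over a prime field it permits spurious higher-degree polynomial components — so the argument must genuinely interlock $(\star)$, (QJ2), $h_{a\tau}=h_a^{-1}$ and the special envelope, and it is in controlling the mixed, trilinear terms that the characteristic hypothesis is indispensable. Everything else (homogeneity, invertibility, and the final weak-to-strong upgrade) is formal.
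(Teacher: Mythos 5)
First, a point of comparison: the paper does not prove this theorem at all. It is quoted from De Medts--Segev \cite{DS}, and the paper's own contribution is only the subsequent corollary removing the hypothesis $char\, U \neq 2,3$ by invoking the Main Theorem. So there is no in-paper proof to measure your argument against; your proposal has to stand on its own as a reconstruction of the argument in \cite{DS}.

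Your framing of what remains to be shown is essentially right: invertibility and homogeneity are given, (QJ1) and (QJ3) hold by construction and (QJ2) by hypothesis, so the substance is (i) biadditivity of $(a,b)\mapsto h_{a,b}$, which makes $\mathcal H$ a genuine quadratic map, and (ii) the upgrade from weak to strict, which for $char\, U\neq 2,3$ is already Remark \ref{remark1}(b) since $k=\mathbb{Q}$ or $\mathbb{F}_p$ with $p\geq 5$ forces $|k|\geq 4$; the Main Theorem is not needed here. The genuine gap is that step (i) --- which you yourself call ``the entire content'' --- is never carried out. Your appeal to specialness conflates two different notions: Segev's theorem \cite{S} says the Moufang set is special in the \emph{Moufang-set} sense (an identity relating $\tau$ and negation on the root group), which provides no associative envelope and no presentation of $h_a$ as $x\mapsto axa$. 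A Jordan algebra being \emph{special} in the sense of this paper's Definition is a property of an already-constructed quadratic Jordan algebra, and producing any such bilinear/associative structure on $U$ is essentially equivalent to what is being proved; the proposed route is therefore circular at its core. The actual proof in \cite{DS} extracts biadditivity from a long chain of $\mu$-map identities valid in special Moufang sets, not from an ambient associative algebra. Finally, your account of where $char\, U\neq 2,3$ is consumed (``dividing by $2$ \dots dividing by $3$'' to control a trilinear term) is speculation that does not match the paper's own explanation, which is that the restriction in \cite{DS} came from the requirement that (QJ1)--(QJ3) hold \emph{strictly}, guaranteed only when $|k|\geq 4$.
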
   
The authors had to exclude the case $char U \in \{2,3\}$ because (QJ1)-(QJ3) are required to hold strictly, 
which was only guaranteed if $|k| \geq 4$. But our main theorem shows that this is always the case 
for weak Jordan division algebras. Thus we get
\begin{coro}
\ref{QJ2} also holds for $char U \in \{2,3\}$.
\end{coro}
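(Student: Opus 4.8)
The plan is to read the corollary off directly from the Main Theorem, using that the characteristic restriction in Theorem~\ref{QJ2} enters \emph{only} through the passage from a weak to a genuine (strict) quadratic Jordan algebra. First I would revisit the construction of $(U,\mathcal{H},e)$ preceding Theorem~\ref{QJ2}. As recorded there, the verifications that $(U,\mathcal{H},e)$ satisfies (QJ1) and (QJ3), that the map $(a,b)\mapsto h_{a,b}=h_{a+b}-h_a-h_b$ is biadditive (so that $\mathcal{H}$ is genuinely quadratic), and that $h_{a\tau}=h_a^{-1}$ and $h_{a\cdot s}=h_a\cdot s^2$, are all carried out in \cite{DS} with no restriction on $char U$. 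In particular each $h_a$ with $a\in U^{\#}$ is invertible, so every nonzero element of $U$ is invertible.

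Next, granting the hypothesis that (QJ2) holds, the three axioms (QJ1)--(QJ3) all hold (non-strictly) for $(U,\mathcal{H},e)$, and every nonzero element is invertible. Hence $(U,\mathcal{H},e)$ is a \emph{weak} quadratic Jordan division algebra in the sense of the present paper, and this conclusion is valid in every characteristic, including $char U\in\{2,3\}$. The sole place where the proof in \cite{DS} used $char U\ne 2,3$ was to guarantee $|k|\geq 4$ and then invoke Remark~\ref{remark1}(b) to upgrade these non-strict identities to strict ones; this is exactly the step that fails when $k=\mathbb{F}_2$ or $k=\mathbb{F}_3$.

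Finally I would apply the Main Theorem: every weak quadratic Jordan division algebra is a quadratic Jordan algebra, so (QJ1)--(QJ3) automatically hold strictly for $(U,\mathcal{H},e)$ regardless of the size of $k$. Therefore $(U,\mathcal{H},e)$ is a quadratic Jordan division algebra also when $char U\in\{2,3\}$, and the isomorphism $\mathbb{M}\cong\mathbb{M}(U,\tau)$ noted before Theorem~\ref{QJ2} identifies $\mathbb{M}$ with the Moufang set attached to $(U,\mathcal{H},e)$.

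The main obstacle, which is more a bookkeeping point than a genuine difficulty, is to confirm that the hypothesis $char U\ne 2,3$ in \cite{DS} is indeed invoked \emph{only} to secure strictness. Concretely, one must check that none of the other steps establishing (QJ1), (QJ3), the biadditivity of $(a,b)\mapsto h_{a,b}$, or the invertibility of the $h_a$ secretly relies on dividing by $2$ or $3$; once this inspection is complete, the corollary is immediate from the Main Theorem.
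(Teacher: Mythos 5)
Your proposal is correct and matches the paper's own (very short) argument: the paper likewise observes that the only role of the hypothesis $char\, U\ne 2,3$ in Theorem~\ref{QJ2} was to guarantee $|k|\geq 4$ and hence strictness of (QJ1)--(QJ3), so that under the hypothesis that (QJ2) holds one already has a weak quadratic Jordan division algebra in every characteristic, and the Main Theorem upgrades it to a quadratic Jordan division algebra. Your closing caveat about checking that \cite{DS} uses the characteristic assumption only for strictness is exactly the point the paper asserts (and, per Remark 5.3(b), is careful to note fails for the alternative biadditivity criterion), so no further work is needed.
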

\begin{remark}
\begin{enumerate}
\item It is sufficient to prove a weaker version of axiom (QJ2) which has to hold in all isotopes of 
$(U,\mathcal{H},e)$, i.e. for all choices of $e \in U \setminus \{0\}$, compare 5.6 of \cite{DS}.
\item If $char U \ne 2,3$, then in order prove that $(U,\mathcal{H},e)$ is a quadratic Jordan division 
algebra, it is also sufficient to prove that the map $(a,b) \mapsto h_{a,b}$ is biadditive (5.12 of
\cite{DS}). In this case however the strictness is not the only obstacle for $char U \in \{2,3\}$ and
therefore it is not yet clear if the statement is also true in this case.
\end{enumerate}
\end{remark}


\begin{thebibliography}{99}
\bibitem{DS} T. De Medts, Y. Segev, {\it Identities in Moufang sets},  Trans. Amer. Math. Soc. 
{\bf 360}, No. 11, 5831-5852 (2008)
\bibitem{DW} T. De Medts, R. Weiss, {\it Moufang sets and Jordan division algebras}, Math. Ann. 
{\bf 335},  No. 2, 415-433 (2006)
\bibitem{McCr} K. McCrimmon, {\it A general theory of Jordan rings},
Proc. Natl. Acad. Sci. USA {\bf 56}, 1072-1079 (1966). 
\bibitem{McZ} K. McCrimmon, E. Zel'manov, {\it The structure of strongly prime Jordan division algebras}, 
Adv. Math. {\bf 69}, 133-222 (1988)
\bibitem{S} Y. Segev, {\it Proper Moufang sets with Abelian root groups are special}, 
J. Am. Math. Soc. {\bf 22}, No. 3, 889-908 (2009).
\bibitem{Tim} F. G. Timmesfeld, {\it Abstract root subgroups and simple groups of Lie-type}. 
Monographs in Mathematics. {\bf 95}. Basel: Birkh\"auser (2001).
\end{thebibliography}
\end{document}